\newcommand{\rrVert}{\Vert}
\newcommand{\llVert}{\Vert}
\newcommand{\eL}{{\cal L}}
\newcommand{\w}{\widehat}
\newcommand{\qubar}{\overline{q}}
\newcommand{\pbar}{\overline{p}}
\newcommand{\Expect}{\mathrm{E}} 
\newcommand{\Var}{\mathrm{Var}} 
\newcommand{\Prob}{\mathrm{P}}
\newcommand{\ii}{{\mathrm i}}
\newcommand{\ee}{{\mathrm e}}
\newcommand{\dd}{{\mathrm{d}}}
\newcommand{\integrald}{{\mathrm{d}}} %
\newcommand{\dirac}{I} 
\newcommand{\NB}{\mathrm{NB}} 
\newcommand{\RR}{\mathbb{R}}
\newcommand{\NN}{\mathbb{N}}
\newcommand{\ZZ}{\mathbb{Z}}
\newcommand{\F}{{\mathcal{F}}}
\newcommand{\M}{{\mathcal{M}}}
\newtheorem{theorem}{Theorem}[section]
\newtheorem{corollary}{Corollary}[section]
\newtheorem{lemma}{Lemma}[section]
\newtheorem{remark}{Remark}[section]
\begin{document}
\begin{frontmatter}
\pretitle{Research Article}

\title{On closeness of two discrete weighted sums}

\author[a]{\inits{V.}\fnms{Vydas}~\snm{\v{C}ekanavi\v{c}ius}\thanksref{cor1}\ead[label=e1]{vydas.cekanavicius@smif.vu.lt}}
\thankstext[type=corresp,id=cor1]{Corresponding author.}
\author[b]{\inits{P.}\fnms{Palaniappan}~\snm{Vellaisamy}\ead[label=e2]{pv@math.iitb.ac.in}}
\address[a]{Department of Mathematics and Informatics,
\institution{Vilnius University},\break
Naugarduko 24, Vilnius 03225, \cny{Lithuania}}
\address[b]{Department of Mathematics,
\institution{Indian Institute of
Technology Bombay},\break
Powai, Mumbai-400076, \cny{India}}



\markboth{V. \v{C}ekanavi\v{c}ius and P. Vellaisamy}{On closeness of
two discrete weighted sums}

\begin{abstract}
The effect that weighted summands have on each other
in approximations of $S=w_1S_1+w_2S_2+\cdots+w_NS_N$ is investigated.
Here, $S_i$'s are sums
of integer-valued random variables, and $w_i$ denote weights,
$i=1,\dots,N$. Two cases are considered: the general case of
independent random variables when their closeness
is ensured by the matching of factorial moments and the case when the
$S_i$ has the Markov Binomial distribution.
The Kolmogorov metric is used to estimate the accuracy of
approximation.
\end{abstract}
\begin{keywords}
\kwd{Characteristic function}
\kwd{concentration function}
\kwd{factorial moments}
\kwd{Kolmogorov metric}
\kwd{Markov Binomial distribution}
\kwd{weighted random variables}
\end{keywords}
\begin{keywords}[MSC2010]%
\kwd{60F05}
\kwd{60J10}
\end{keywords}

\received{\sday{1} \smonth{2} \syear{2018}}
\revised{\sday{16} \smonth{4} \syear{2018}}
\accepted{\sday{27} \smonth{4} \syear{2018}}
\publishedonline{\sday{21} \smonth{5} \syear{2018}}
\end{frontmatter}

\section{Introduction}\label{sec1}

Let us consider a typical cluster sampling design: the
entire population consists of different clusters, and the
probability for each cluster to be selected into a sample is
known. The sum of sample elements is then equal to
$S=w_1S_1+w_2S_2+\cdots+w_NS_N$. Here, $S_i$ is the sum of
independent identically distributed (iid) random variables (rvs) from
the $i$-th cluster.
A similar situation arises in actuarial mathematics
when the sum $S$ models the discounted amount of the total net loss of
a company, see, for example,
\cite{TaTs03}. Note that then $S_i$ may be the sum of dependent rvs. Of
course, in actuarial models,
$w_i$ are also typically random, which makes our research just a first
step in this direction.
In many papers, the limiting behavior of weighted sums is
investigated with the emphasis on weights or tails of distributions,
see, for example, \cite
{DLS17,LChS17,Liang06,MOC12,Soo01,YCS11,YLS16,YW12,W11,WV16,ZSW09},
and references therein. We, 
however,
concentrate on the impact of $S-w_iS_i$ on $w_iS_i$.
Our research is motivated by the following simple example. Let us
assume that $S_i$ is in some sense close to $Z_i$, $i=1,2$. Then a
natural approximation to $w_1S_1+w_2S_2$ is $w_1Z_1+w_2Z_2$. Suppose
that we want to estimate the closeness of both sums in some metric
$d(\cdot,\cdot)$. The standard approach which works for the majority of
metrics then gives
\begin{equation}
d(w_1S_1+w_2S_2,w_1Z_1+w_2Z_2)
\leqslant d(w_1S_1,w_1Z_1)+d(w_2S_2,w_2Z_2).\label{afirst}
\end{equation}
The triangle inequality (\ref{afirst}) is not always useful. For example,
let $S_1$ and $Z_1$ have the same Poisson distribution with parameter
$n$ and let $S_2$ and $Z_2$ be Bernoulli variables with probabilities
1/3 and 1/4, respectively. Then (\ref{afirst}) ensures the trivial order of
approximation $O(1)$ only. Meanwhile, both $S$ and $Z$ can be treated
as small (albeit different) perturbations to the same Poisson variable
and, therefore, one can expect closeness of their distributions at
least for large $n$. The `smoothing' effect that other sums have on the
approximation of $w_iS_i$ is already observed in \cite{ElCe15} (see
also references therein). For some general results involving the
concentration functions, see, for example, \cite{Hipp85,Ro03}.

To make our goals more explicit, we need additional notation. Let $\ZZ$
denote the set of all integers. Let
$\F$ (resp.~$\F_Z$, resp.~$\M$) denote the set of
probability distributions (resp. distributions concentrated on integers,
resp.~finite signed measures) on $\RR$.
Let $I_a$ denote the distribution concentrated at real $a$ and set $I =
I_0$ . Henceforth,
the products and powers of measures are understood in the convolution sense.
Further, for a measure $M$, we set $M^0 = I$ and $\exp\{M\}=\sum_{k=0}^\infty M^k/k!$.
We denote by $\widehat M(t)$ the Fourier--Stieltjes
transform of $M$. The real part of $\w M(t)$ is denoted by $Re\w
M(t)$. Observe also that $\w{\exp\{M(t)\}}=\exp\{\w M(t)\}$. We
also use $\eL(\xi)$ to denote the distribution of $\xi$.

The
Kolmogorov (uniform) norm $\vert M\vert_K$ and the total variation
norm $\| M\|$ of $M$ are defined by
\[
\vert M\vert_K=\sup_{x\in\RR}\bigl\vert M\bigl((-\infty,x]\bigr)
\bigr\vert,\qquad \| M\|=M^{+}\{\RR\}+M^{-}\{\RR\},
\]
respectively. Here $M=M^{+}-M^{-}$ is the Jordan--Hahn
decomposition of $M$.
Also, for any two measures $M$ and $V$, $\vert M\vert_K\leqslant\| M\|$,
$\vert MV\vert_K\leqslant\| M\|\cdot\vert V\vert_K$, $\vert\w
M(t)\vert\leqslant\| M\|$, $\| \exp\{M\}\|\leqslant\exp\{\| M\|\}
$. If $F\in\F$, then
$\vert F\vert_K=\| F\|=\| \exp\{F-\dirac\}\|=1$. Observe also that, if
$M$ is concentrated on integers, then
\[
M=\sum_{k=-\infty}^\infty M\{k\}\,
\dirac_k,\qquad\w M(t)=\sum_{k=-\infty
}^\infty
\ee^{\ii tk}M\{k\},\qquad\| M\|=\sum_{k=-\infty}^\infty
\bigl\vert M\{k\}\bigr\vert.
\]

For $F\in\F$, $h\geqslant0$, L\' evy's
concentration function is defined by
\begin{equation*}
Q(F,h)=\sup_xF \bigl\{[x,x+h] \bigr\}.
\end{equation*}

All absolute positive
constants are denoted by the same symbol $C$. Sometimes to avoid
possible ambiguities, the constants $C$ are supplied
with indices. Also, the constants depending on parameter $N$ are
denoted by $C(N)$. We also assume usual conventions $\sum_{j=a}^b=0$
and $\prod_{j=a}^b=1$, if $b<a$. The notation $\varTheta$ is used for any
signed measure satisfying $\| \varTheta\|\leqslant1$. The notation
$\theta$ is used for any real or complex number satisfying $\vert
\theta\vert\leqslant1$.

\section{Sums of independent rvs}

The results of this section are partially inspired by a comprehensive
analytic research of probability generating functions in \cite{Hw99}
and the papers on \emph{mod}-Poisson convergence, see \cite
{BKN14,KN10,KNN15}, and references therein.
Assumptions in the above-mentioned papers are made about the behavior
of characteristic or probability generating functions. The inversion
inequalities are then used to translate their differences 
to the differences of distributions.
In principle, \emph{mod}-Poisson convergence means that if an initial
rv is a perturbation of some Poisson
rv, then their distributions must be close. Formally, it is required
for $\exp\{-\tilde\lambda_n(\ee^{\ii t}-1)\}f_n(t)$ to have a limit
for some sequence of Poisson parameters $\tilde\lambda_n$, as $n\to
\infty$. Here, $f_n(t)$ is a characteristic function of an investigated
rv. Division by a certain Poisson characteristic function is one of the
crucial steps in the proof of Theorem \ref{Teorema} below, which makes it
applicable to rvs satisfying the \emph{mod}-Poisson convergence
definition, provided they can be expressed as sums of independent rvs.
Though we use factorial moments, similar to Section 7.1 in \cite
{BKN14}, our work is much more closer in spirit to \cite{SC88},
where general lemmas about the closeness of lattice measures are
proved. 

In this section, we consider a general case of independent
non-identically distributed rvs, forming a triangular array (a scheme
of series). Let $S_i=X_{i1}+X_{i2}+\cdots+X_{in_i}$,
$Z_i=Z_{i1}+Z_{i2}+\cdots+Z_{in_i}$, $i=1,2,\dots,N$. We assume that
all the $X_{ij}$, $Z_{ij}$ are mutually independent and integer-valued.
Observe that, in general, $S=\sum_{i=1}^Nw_iS_i$ and $Z=\sum_{i=1}^Nw_iZ_i$ are
\emph{not} integer-valued and, therefore, the
standard methods of estimation of lattice rvs do not apply. Note also
that, since any infinitely divisible distribution can be expressed as a
sum of rvs, Poisson, compound Poisson and negative binomial rvs can be
used as $Z_i$.

The distribution of $X_{ij}$ (resp. $Z_{ij}$) is denoted by $F_{ij}$
(resp. $G_{ij}$). The closeness of characteristic functions will be
determined by the closeness of corresponding factorial moments. Though
it is proposed in \cite{BKN14} to use standard factorial moments even
for rvs taking negative values, we think that right-hand side and
left-hand side factorial moments, already used in \cite{SC88}, are more
natural characteristics.
Let, for $k=1,2,\dots$, and any $F\in\F_Z$,
\begin{align*}
\nu_k^{+}(F_{ij})&=\sum
_{m=k}^\infty m(m-1)\cdots(m-k+1)F_{ij}\{m\},
\\
\nu_k^{-}(F_{ij})&=\sum
_{m=k}^\infty m(m-1)\cdots(m-k+1)F_{ij}\{-m\}.
\end{align*}
For the estimation of the remainder terms we also need the following
notation:\break
$\beta_k^{\pm}(F_{ij},G_{ij})=\nu_k^{\pm}(F_{ij})+\nu_k^{\pm
}(G_{ij})$, $\sigma^2_{ij}=\max(\Var(X_{ij}),\Var(Z_{ij}))$, and\vadjust{\goodbreak}
\begin{align*}
u_{ij}&=\min \biggl\{1-\frac{1}{2}\bigl\| F_{ij}(
\dirac_1-\dirac)\bigr\| ;1-\frac
{1}{2}\bigl\| G_{ij}(
\dirac_1-\dirac)\bigr\| \biggr\}
\\
&=\min \Biggl\{\sum_{k=-\infty}^\infty\min
\bigl(F_{ij}\{k\},F_{ij}\{k-1\} \bigr);\sum
_{k=-\infty}^\infty\min \bigl(G_{ij}\{k
\},G_{ij}\{k-1\} \bigr) \Biggr\}.
\end{align*}
For the last equality, see (1.9) and (5.15) in \cite{Ce16}. Next we
formulate our assumptions. For some fixed integer $s\geqslant1$,
$i=1,\dots, N,\ j=1,\dots,n_i$,
\begin{align}
u_{ij}&>0,\qquad\sum_{j=1}^{n_i}u_{ij}
\geqslant1, \qquad n_i\geqslant1,\qquad w_i>0,\label{sal1}
\\
\nu_k^{+}(F_{ij})&=\nu_k^{+}(G_{ij}),
\qquad\nu_k^{-}(F_{ij})=\nu_k^{-}(G_{ij}),
\quad k=1,2,\dots,s\label{sal2}
\\
\beta_{s+1}^{+}(F_{ij},G_{ij})&+
\beta_{s+1}^{-}(F_{ij},G_{ij})<\infty.
\label{sal4} 
\end{align}

Now we are in position to formulate the main result of this section.


%
\begin{theorem} \label{Teorema} Let assumptions (\ref{sal1})--(\ref
{sal4}) hold. Then
\begin{align}
\!\!\!\bigl\vert\eL(S)-\eL(Z)\bigr\vert_K &\leqslant C(N,s)\frac{\max_jw_j}{\min_j
w_j}
\Biggl(\sum_{i=1}^N\sum
_{l=1}^{n_i}u_{il} \Biggr)^{-1/2}
\prod_{l=1}^N \Biggl(1+\sum
_{k=1}^{n_l}\sigma^2_{lk} /\sum
_{k=1}^{n_l} u_{lk} \Biggr)
\nonumber
\\
&\quad\times \sum_{i=1}^N\sum
_{j=1}^{n_i} \bigl[\beta^{+}_{s+1}(F_{ij},G_{ij})+
\beta_{s+1}^{-}(F_{ij},G_{ij})\bigr]
\Biggl(\sum_{k=1}^{n_i}u_{ik}
\Biggr)^{-s/2}. \label{BTa}
\end{align}
If, in addition, $s$ is even and
$\beta_{s+2}^{+}(F_{ij},G_{ij})+\beta_{s+2}^{-}(F_{ij},G_{ij})<\infty$,
then
\begin{align}
\bigl\vert\eL(S)-\eL(Z)\bigr\vert_K &\leqslant C(N,s)\frac{\max_jw_j}{\min_j
w_j}
\Biggl(\sum_{i=1}^N\sum
_{l=1}^{n_i}u_{il} \Biggr)^{-1/2}
\prod_{l=1}^N \Biggl(1+\sum
_{k=1}^{n_l}\sigma^2_{lk} /\sum
_{k=1}^{n_l} u_{lk} \Biggr)
\nonumber
\\
&\quad\times \sum_{i=1}^N\sum
_{j=1}^{n_i} \Biggl(\sum_{k=1}^{n_i}u_{ik}
\Biggr)^{-s/2} \Biggl(\bigl\vert\beta ^{+}_{s+1}(F_{ij},G_{ij})-
\beta_{s+1}^{-}(F_{ij},G_{ij})\bigr\vert
\nonumber
\\
&\quad+ \bigl[\beta^{+}_{s+2}(F_{ij},G_{ij})+
\beta _{s+2}^{-}(F_{ij},G_{ij})
\nonumber
\\
&\quad+ \beta^{-}_{s+1}(F_{ij},G_{ij})
\bigr] \Biggl(\sum_{k=1}^{n_i}u_{ik}
\Biggr)^{-1/2} \Biggr). \label{BTb}
\end{align}
\end{theorem}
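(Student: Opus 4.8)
The plan is to work entirely on the Fourier side. Writing $\w S(t)=\prod_{i,j}\w F_{ij}(w_it)$ and $\w Z(t)=\prod_{i,j}\w G_{ij}(w_it)$, I would bound the Kolmogorov norm of $\eL(S)-\eL(Z)$ by the general closeness lemmas for lattice-type measures of \cite{SC88}, which reduce the problem to a Fourier integral over a bounded symmetric interval $[-T,T]$ with $T\asymp 1/\max_jw_j$. The point of this choice of $T$ is that on $[-T,T]$ each factor $\sin^2(w_kt/2)$ stays on its first hump, so the periodic returns of the characteristic functions (where a naive full-line $1/t$ inversion would diverge) are excluded; the truncation defect is absorbed into a concentration-function term of the reference law. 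I would then telescope the product difference into $\sum_{i,j}(\text{prefix})\,(\w F_{ij}(w_it)-\w G_{ij}(w_it))\,(\text{suffix})$, where prefix and suffix are products of the remaining factors.

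Two pointwise estimates drive everything. First, expanding a characteristic function in powers of $(\ee^{\ii u}-1)$ and $(\ee^{-\ii u}-1)$ and using the matching condition (\ref{sal2}) to kill the first $s$ terms, a Taylor-remainder argument gives
\[
\bigl\vert\w F_{ij}(u)-\w G_{ij}(u)\bigr\vert\leqslant C(s)\bigl[\beta_{s+1}^{+}(F_{ij},G_{ij})+\beta_{s+1}^{-}(F_{ij},G_{ij})\bigr]\bigl\vert\ee^{\ii u}-1\bigr\vert^{s+1},
\]
so that at $u=w_it$ the isolated factor supplies the crucial weight $\vert\sin(w_it/2)\vert^{s+1}$. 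Second, the definition of $u_{ij}$ yields a smoothing bound of the form $\vert\w F_{ij}(t)\vert\leqslant\exp\{-cu_{ij}\sin^2(t/2)\}$ (and likewise for $G_{ij}$), so the surviving factors are dominated by $\exp\{-c\sum_{k,l}u_{kl}\sin^2(w_kt/2)\}$.

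Each telescoped term is thereby reduced to an integral of the schematic form
\[
\beta_{s+1}(F_{ij},G_{ij})\int_{-T}^{T}\bigl\vert\sin(w_it/2)\bigr\vert^{s+1}\exp\Bigl\{-c\sum_{k,l}u_{kl}\sin^2(w_kt/2)\Bigr\}\frac{\dd t}{\vert t\vert},
\]
and estimating this uniformly in the weights is where essentially all the work lies. I would split the $s+1$ powers of $\vert\sin(w_it/2)\vert$: charge $s$ of them against the same-frequency cluster smoothing through the elementary bound $x^{s}\exp\{-ax^2\}\leqslant Ca^{-s/2}$ with $a\asymp\sum_k u_{ik}$, producing the factor $(\sum_k u_{ik})^{-s/2}$; the remaining $\vert\sin(w_it/2)\vert/\vert t\vert$ stays bounded by $C\max_jw_j$ near the origin, while the other clusters' Gaussian smoothing integrates to $(\sum_{i,l}u_{il})^{-1/2}$, the frequency mismatch costing the ratio $\max_jw_j/\min_jw_j$. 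The product factor $\prod_l(1+\sum_k\sigma^2_{lk}/\sum_k u_{lk})$ and the $\sigma^2_{ij}$ terms then enter from the sharper estimate needed near the ends of $[-T,T]$ and in the return/tail region, where a single $\sin^2$ no longer controls the modulus and a variance-based bound must replace it. Summing over $(i,j)$ gives (\ref{BTa}); I expect this multi-frequency oscillatory-integral estimate, together with the truncation bookkeeping, to be the main obstacle, whereas the inversion, telescoping and pointwise bounds are routine once it is in place.

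For the refined bound (\ref{BTb}), under the extra hypotheses that $s$ is even and $\beta_{s+2}^{\pm}$ is finite, I would expand the isolated factor one order further, separating its genuine $(s+1)$-st order part, proportional to $(\nu_{s+1}^{+}(F_{ij})-\nu_{s+1}^{+}(G_{ij}))(\ee^{\ii w_it}-1)^{s+1}+(\nu_{s+1}^{-}(F_{ij})-\nu_{s+1}^{-}(G_{ij}))(\ee^{-\ii w_it}-1)^{s+1}$, from an $O(\vert\sin(w_it/2)\vert^{s+2})$ remainder. Since $s+1$ is odd, $(\ee^{\ii w_it}-1)^{s+1}$ and $(\ee^{-\ii w_it}-1)^{s+1}$ have opposite parity in $t$, so integrated against the even kernel $\exp\{-c\sum u\sin^2\}/\vert t\vert$ the symmetric combination cancels and only the difference of positive and negative moments survives, yielding the leading term $\vert\beta_{s+1}^{+}(F_{ij},G_{ij})-\beta_{s+1}^{-}(F_{ij},G_{ij})\vert$. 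The remainder and the antisymmetric leftover each carry one extra power of $\vert\sin(w_it/2)\vert$, hence one extra factor $(\sum_k u_{ik})^{-1/2}$, producing precisely the bracketed structure of (\ref{BTb}).
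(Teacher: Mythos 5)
Your overall architecture (telescoping the product, moment matching giving $\vert\sin(w_it/2)\vert^{s+1}$ smallness, exponential smoothing via the $u_{ij}$, then the elementary bound $x^{s}\ee^{-ax^{2}}\leqslant Ca^{-s/2}$) parallels the paper, and your treatment of the Fourier \emph{integral} term essentially reproduces the paper's estimates. The genuine gap is in the inversion step, which you dismiss as routine. The paper stresses that $S$ and $Z$ are \emph{not} lattice rvs, so the lattice closeness lemmas of \cite{SC88} do not apply, and since both laws are discrete there is no Esseen-type inequality with a harmless density/truncation term. The tool that works is Le Cam's smoothing inequality (Lemma \ref{ad}), whose second term is $\| W\| Q(U,h)$: you must factor each telescoped term as $WU$ and pay the \emph{total variation norm} of $W$ times a concentration function. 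Here is the trap your plan falls into: if you take $W=F_{ij,w}-G_{ij,w}$ (so that $\| W\|\leqslant2$ is trivial) and put everything else into $U$, the defect term is only $O((\sum_{k,l}u_{kl})^{-1/2})$ — it carries neither $\beta_{s+1}$ nor $(\sum_ku_{ik})^{-s/2}$ and destroys the bound \eqref{BTa}. To make the defect small one must put $(\dirac_{w_i}-\dirac)^{s+1}$, together with enough convolution factors to smooth it, inside $W$; the paper does this by convolving with the inverse compound Poisson measure $\exp\{-L\}$ and then bounding $\| W_1\|$ via the inversion inequality with derivatives (Lemma \ref{varijotas} combined with \eqref{mineka2} and \eqref{roos}). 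That total-variation estimate is the real work of the proof, and it — not any ``return/tail region'' of $[-T,T]$, which does not exist since $\vert w_kt/2\vert\leqslant\pi/2$ there — is where the factor $\prod_l(1+\sum_k\sigma^2_{lk}/\sum_ku_{lk})$ comes from. Your proposal never confronts this term, so as written it does not yield \eqref{BTa}.

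A secondary flaw concerns \eqref{BTb}: the cancellation you invoke is not an integral parity cancellation. The inversion bound produces $\int\vert\cdot\vert\,\dd t$ with the absolute value \emph{inside}, so oddness under $t\mapsto-t$ buys nothing (and in fact $(\ee^{\ii w_it}-1)^{s+1}+(\ee^{-\ii w_it}-1)^{s+1}$ is even in $t$, not odd). The correct mechanism, used in the paper's Lemma \ref{f-g}, is pointwise and algebraic: for $s$ even one has $(\dirac_1-\dirac)^{s+1}+(\dirac_{-1}-\dirac)^{s+1}=(\dirac_1-\dirac)^{s+2}\varTheta(s+1)$, equivalently $\vert(\ee^{\ii u}-1)^{s+1}+(\ee^{-\ii u}-1)^{s+1}\vert\leqslant(s+1)2^{s+2}\vert\sin(u/2)\vert^{s+2}$, so the symmetric part is genuinely one order smaller at every $t$. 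Your conclusion (one extra power of $\vert\sin\vert$, hence one extra factor $(\sum_ku_{ik})^{-1/2}$) is right and this part is fixable, but the stated mechanism would not survive scrutiny.
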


The factor $(\sum_{i=1}^n\sum_{j=1}^{n_i}u_{ij})^{-1/2}$ estimates the
impact of $S$ on approximation of $w_iS_i$.
The estimate (\ref{BTb}) takes care of a possible symmetry of distributions.


If, in each sum $S_i$ and $Z_i$, all the rvs are identically
distributed, then we can get rid of the factor containing variances.
We say that condition (\textit{ID}) is satisfied if, for each $i=1,2,\dots,N$,
all rvs $X_{ij}$ and $Z_{ij}$ ($j=1,\dots, n_i$) are iid with
distributions $F_i$
and $G_i$, respectively. Observe, that if condition (\textit{ID}) is
satisfied, then the characteristic functions of $S$ and $Z$ are
respectively equal to
\[
\prod_{i=1}^N\w F_i^{n_i}(w_it),
\qquad\prod_{i=1}^N\w G_i^{n_i}(w_it).
\]
We also use notation $u_i$ instead of $u_{ij}$, since now
$u_{i1}=u_{i2}=\cdots=u_{in_i}$.\vadjust{\goodbreak}




\begin{theorem}\label{TeoremaID} Let the assumptions (\ref
{sal1})--(\ref
{sal4}) and the condition (ID) hold. Then
\begin{align}
\bigl\vert\eL(S)-\eL(Z)\bigr\vert_K&\leqslant C(N,s)\frac{\max_jw_j}{\min_j
w_j} \Biggl(
\sum_{i=1}^Nn_iu_{i}
\Biggr)^{-1/2}
\nonumber
\\
&\quad\times\sum_{i=1}^N
\frac{\beta^{+}_{s+1}(F_{i},G_{i})+\beta
_{s+1}^{-}(F_{i},G_{i})}{n_i^{s/2-1}u_i^{s/2}}. \label{BTcc}
\end{align}
\end{theorem}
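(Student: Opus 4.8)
The plan is to follow the scheme used for Theorem~\ref{Teorema}, but to exploit condition~(ID) at the single step where the variance factor was unavoidable in the general case, thereby eliminating the product $\prod_l(1+\sum_k\sigma^2_{lk}/\sum_k u_{lk})$. Under~(ID) the characteristic functions factor as $\w S(t)=\prod_{i=1}^N\w F_i^{n_i}(w_it)$ and $\w Z(t)=\prod_{i=1}^N\w G_i^{n_i}(w_it)$, and $u_{ij}=u_i$, $\sigma^2_{ij}=\sigma^2_i$, $\beta^\pm_{s+1}(F_{ij},G_{ij})=\beta^\pm_{s+1}(F_i,G_i)$ for all $j$. First I would invoke the same smoothing (inversion) inequality used to prove Theorem~\ref{Teorema} to bound $|\eL(S)-\eL(Z)|_K$ by a Fourier integral of $|\w S(t)-\w Z(t)|/|t|$ together with a concentration-function remainder. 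Next I would telescope $\w S(t)-\w Z(t)$ twice: over the clusters $i=1,\dots,N$ and, within each cluster, over the $n_i$ equal factors, so that a generic term isolates one difference $\w F_i(w_it)-\w G_i(w_it)$ multiplied by the $n_i-1$ surviving copies from cluster~$i$ and by the full products from the remaining clusters. The telescoping over powers produces the combinatorial factor $n_i$.

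The estimate then rests on two bounds. The matching of factorial moments~(\ref{sal2}) up to order~$s$ yields, by a Charlier--Taylor expansion, $|\w F_i(w_it)-\w G_i(w_it)|\leqslant C(s)|\ee^{\ii w_it}-1|^{s+1}[\beta^+_{s+1}(F_i,G_i)+\beta^-_{s+1}(F_i,G_i)]$. The crucial point where~(ID) helps is the smoothing: from the definition of $u_i$ one has $|\w F_i(w_it)|\leqslant\exp\{-cu_i(1-\cos w_it)\}$ (and likewise for $\w G_i$), so the surviving factors contribute the single clean exponential $\exp\{-c\sum_l n_lu_l(1-\cos w_lt)\}$, with no variance correction. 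I then split $|\ee^{\ii w_it}-1|^{s+1}=|\ee^{\ii w_it}-1|^s\cdot|\ee^{\ii w_it}-1|$: the first $s$ powers are absorbed into the local decay of cluster~$i$ through the pointwise bound $\sup_{x\geqslant0}x^{s/2}\ee^{-cn_iu_ix}\leqslant C(s)(n_iu_i)^{-s/2}$, and the last power, divided by $|t|$ and integrated against the remaining global decay, gives $\frac{\max_j w_j}{\min_j w_j}(\sum_l n_lu_l)^{-1/2}$ after the change of variables that extracts the weight ratio (using $\sum_l n_lu_lw_l^2\geqslant(\min_j w_j)^2\sum_l n_lu_l$).

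Collecting the telescoping factor $n_i$, the local factor $(n_iu_i)^{-s/2}$, and the global factor $\frac{\max_j w_j}{\min_j w_j}(\sum_l n_lu_l)^{-1/2}$, and summing over $i$, gives exactly the per-cluster weight $n_i(n_iu_i)^{-s/2}=1/(n_i^{s/2-1}u_i^{s/2})$ of~(\ref{BTcc}). The main obstacle is the bookkeeping of the smoothing: one must reserve $s$ powers of decay for the \emph{local} sup-bound and exactly one power for the \emph{global} integral, rather than spending all $s+1$ powers globally, since only this split produces the stated combination $(\sum_l n_lu_l)^{-1/2}(n_iu_i)^{-s/2}$ together with the first power (not the $(s+1)$-st power) of the weight ratio. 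A secondary point is to bound the concentration-function remainder of the smoothing inequality by the same exponential decay, and to treat the degenerate case $n_i=1$ (where~(\ref{sal1}) forces $u_i=1$, so the local factor is trivially one) through the constant $C(N,s)$.
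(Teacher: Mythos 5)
Your route is the paper's route: telescope over the clusters and over the $n_i$ identical factors, replace the isolated difference using the matched factorial moments (Lemma \ref{f-g}), dominate all surviving factors through (\ref{mineka2}), and feed the result into Le Cam's smoothing inequality (Lemma \ref{ad}) with the decay split ``$s$ powers locally, one power globally''. The Fourier-integral half of your argument is correct and does produce the per-cluster weight $n_i(n_iu_i)^{-s/2}\cdot\frac{\max_jw_j}{\min_jw_j}(\sum_ln_lu_l)^{-1/2}$ of (\ref{BTcc}).

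The genuine gap is exactly the point you dismiss as secondary. Lemma \ref{ad} bounds $\vert WU\vert_K$ by the Fourier integral \emph{plus} $C\Vert W\Vert Q(U,h)$, and this remainder cannot be ``bounded by the same exponential decay'': a total variation norm does not see the oscillation of $(\dirac_{w_i}-\dirac)^{s}$, and a concentration function of the smooth part cannot decay faster than $(\sum_ln_lu_l)^{-1/2}$. In your decomposition $W=\eL(w_iX_{ij})-\eL(w_iZ_{ij})$ and $U$ is everything else, so $\Vert W\Vert\leqslant\min\{2,\,C(s)[\beta^{+}_{s+1}(F_i,G_i)+\beta^{-}_{s+1}(F_i,G_i)]\}$ and $Q(U,h)\leqslant C\frac{\max_jw_j}{\min_jw_j}(\sum_ln_lu_l)^{-1/2}$, with no factor $(n_iu_i)^{-s/2}$ anywhere. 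Each telescoped term therefore contributes a remainder of order $\beta_{s+1}\frac{\max_jw_j}{\min_jw_j}(\sum_ln_lu_l)^{-1/2}$, and summation over the $n_i$ positions gives $\sum_in_i\beta_{s+1}(F_i,G_i)(\sum_ln_lu_l)^{-1/2}$, which misses (\ref{BTcc}) by the factor $(n_iu_i)^{s/2}$ that the theorem is about; for instance, with $s=2$, $u_i\asymp1$, $\beta_{s+1}\asymp1$ your bound is of order $(\sum_ln_l)^{1/2}$ and diverges, while (\ref{BTcc}) tends to zero. The paper's proof contains the structural idea your plan lacks: after applying Lemma \ref{f-g} at the level of measures, it splits the $n_i-1$ surviving identical factors into two halves, putting one half \emph{inside} $W$, namely $W=(\dirac_{w_i}-\dirac)^{s+1}F_{iw}^{a(k)}G_{iw}^{b(k)}$ with $a(k)=\lfloor(k-1)/2\rfloor$, $b(k)=\lfloor(n_i-k)/2\rfloor$, and the other half together with all remaining clusters into the concentration function. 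Then the leading term $\sup_{\vert t\vert\leqslant1/h}\vert\w W(t)/t\vert h^{-1}$ of Lemma \ref{ad} itself carries $(n_iu_i)^{-s/2}$, while $\Vert W\Vert$ stays bounded by $C(s)$ with no appeal to Lemma \ref{varijotas}; this halving is precisely where condition (ID) eliminates the variance product of Theorem \ref{Teorema}. Note that even in that organization the remainder $\Vert W\Vert Q$ must still be endowed with the local factor, which requires a total-variation smoothing estimate of Barbour--Xia/Mineka type for $\Vert(\dirac_{w_i}-\dirac)^{s+1}F_{iw}^{a(k)}G_{iw}^{b(k)}\Vert$ --- available under (ID) because one has genuine powers of a single distribution --- so any complete write-up, yours included, has to supply that bound explicitly.
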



How does Theorem \ref{Teorema} compare to the known results? In \cite
{CeEl14}, compound Poisson-type approximations to non-negative iid rvs
in each sum were considered under the additional Franken-type condition:
\begin{equation}
\nu_1^{+}(F_j)- \bigl(\nu_1^{+}(F_j)
\bigr)^2-\nu_2^{+}(F_j)>0,
\label{Franken}
\end{equation}
see \cite{Frank64}.
Similar assumptions were used in \cite{ElCe15,SC88}. Observe that
Franken's condition requires almost all probabilistic mass to be
concentrated at 0 and 1. Indeed, then
$\nu_1^{+}(F_j)<1$ and $F_j\{1\}\geqslant\sum_{k=3}^\infty
k(k-2)F_j\{
k\}$.
Meanwhile, Theorems \ref{Teorema} and \ref{TeoremaID} hold under much
milder assumptions and, as demonstrated in the example below, can be
useful even if (\ref{Franken}) is not satisfied. Therefore, even for
the case of one sum when $N=1$, our results are new.


\textbf{Example.} Let $N=2$, $w_1=1$, $w_2=\sqrt{2}$, and $F_j$ and
$G_j$ be defined by
$F_j\{0\}=0.375$, $F_j\{1\}=0.5$, $F_j\{4\}=0.125$,
$G_j\{0\}=0.45$, $G_j\{1\}=0.25$, $G_j\{2\}=0.25$,
$G_j\{5\}=0.05$, $(j=1,2)$. We assume that $n_2=n$ and $n_1=\lceil
\sqrt
n\,\rceil$ is the smallest integer greater or equal to $\sqrt{n}$. Then
$\nu_k^{+}(F_j)=\nu_k^{+}(G_j)$, $k=1,2,3$, $\beta_4^{+}(F_j,G_j)=9$,
$u_j=3/8$, $(j=1,2)$.
Therefore, by Theorem \ref{TeoremaID}
\[
\bigl\vert\eL(S)-\eL(Z)\bigr\vert_K\leqslant\frac{C}{\sqrt
{n_1+n_2}} \biggl(
\frac
{1}{n_1}+\frac{1}{n_2} \biggr)=O \bigl(n^{-1} \bigr).
\]
In this case, Franken's condition (\ref{Franken}) is
not satisfied, since
$\nu_1^{+}(F_j)-\nu_2^{+}(F_j)-\break(\nu_1^{+}(F_j))^2<0$.


Next we apply Theorem \ref{TeoremaID} to the negative binomial
distribution. For real $r > 0$ and $0<\tilde p <1$, let $\xi\sim\NB
(r, \tilde p)$
denote the distribution with
\begin{equation*}
\label{defY} \Prob(\xi= k) = {r + k -1 \choose k} \tilde p^r
\tilde q^k, \quad k =0,1,\ldots.
\end{equation*}
Here $\tilde q = 1 -\tilde p$. Note that $r$ is not
necessarily an integer.

Let $X_{1j}$ be concentrated on non-negative
integers ($\nu_k^{-}(F_j)=0$). We approximate $S_i$ by
$Z_i\sim\NB(r_i,p_i)$ with
\begin{equation*}
r_i = \frac{(\Expect S_i)^2}{\Var S_i - \Expect S_i}, \qquad\tilde p_i =
\frac{\Expect S_i}{\Var S_i},\label{alpu}
\end{equation*}
so that $\Expect S_i = r_i \tilde q_i/\tilde p_i$ and
$\Var S_i = r_i \tilde q_i/\tilde p_i^2$.
Observe that
%
\begin{equation}
\label{nbg} 
\w G_j(t)= \biggl(\frac{\tilde p_j}{1-\tilde q_j\ee^{\ii
t}}
\biggr)^{r_j/n_j}.
\end{equation}


\begin{corollary} \label{cornb} Let assumptions of Theorem \ref
{TeoremaID} hold with $X_{1j}$ concentrated on non-negative
integers and let $\Expect X^3_{1j}<\infty$, $(j=1,\dots,N)$. Let $G_j$
be defined by (\ref{nbg}). Then
\begin{align}
\bigl\vert\eL(S)-\eL(Z)\bigr\vert_K &\leqslant C\frac{\max_jw_j}{\min_j
w_j} \Biggl(\sum
_{i=1}^Nn_i\tilde
u_i \Biggr)^{-1/2}
\nonumber
\\
&\quad\times\sum_{k=1}^N \biggl[
\nu_3^{+}(F_k)+\nu_1^{+}(F_k)
\nu_2^{+}(F_k)+ \bigl(\nu_1^{+}(F_k)
\bigr)^3
\nonumber
\\
&\quad+\frac{(\nu_2^{+}(F_k)-(\nu_1^{+}(F_k))^2)^2}{\nu
_1^{+}(F_k)} \biggr]\tilde u_k^{-1}.
\label{nba}
\end{align}
Here
\[
\tilde u_k=1-\frac{1}{2}\max \biggl(\bigl\| ( \dirac_1-
\dirac)F_k\bigr\|, \biggl(r_k\ln \frac{1}{\tilde p_k}
\biggr)^{-1/2} \biggr).
\]
\end{corollary}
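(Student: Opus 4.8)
The plan is to read the bound off Theorem~\ref{TeoremaID} with $s=2$ and then supply the two negative-binomial-specific ingredients that it leaves open: the third factorial moment $\nu_3^{+}(G_i)$ and the smoothness factor $u_i$. First I would check the hypotheses. Since $X_{1j}$ and the negative binomial $G_j$ both live on the non-negative integers, $\nu_k^{-}(F_j)=\nu_k^{-}(G_j)=0$, so only right-hand factorial moments occur. The choice $r_i=(\Expect S_i)^2/(\Var S_i-\Expect S_i)$, $\tilde p_i=\Expect S_i/\Var S_i$ is exactly a matching of the first two moments: for $G_i=\NB(r_i/n_i,\tilde p_i)$ one gets $\nu_1^{+}(G_i)=(r_i/n_i)\tilde q_i/\tilde p_i=\Expect S_i/n_i=\nu_1^{+}(F_i)$ and $\Var(Z_{ij})=(r_i/n_i)\tilde q_i/\tilde p_i^2=\Var S_i/n_i=\Var(X_{ij})$, whence $\nu_2^{+}(G_i)=\Var(Z_{ij})+\nu_1^{+}(G_i)^2-\nu_1^{+}(G_i)=\nu_2^{+}(F_i)$. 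So (\ref{sal2}) holds with $s=2$, while $\Expect X_{1j}^3<\infty$ makes $\nu_3^{+}(F_j)$ finite and hence gives (\ref{sal4}). Thus (\ref{BTcc}) applies, leaving the factors $\beta_3^{+}(F_i,G_i)\,u_i^{-1}$ and $(\sum_i n_iu_i)^{-1/2}$ to be estimated.

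Next I would evaluate $\beta_3^{+}(F_i,G_i)=\nu_3^{+}(F_i)+\nu_3^{+}(G_i)$. Writing $\rho=r_i/n_i$, the generating function $(\tilde p_i/(1-\tilde q_iz))^{\rho}$ gives $\nu_3^{+}(G_i)=\rho(\rho+1)(\rho+2)(\tilde q_i/\tilde p_i)^3$. Using the two matched moments to eliminate the parameters, namely $\tilde q_i/\tilde p_i=(\nu_2^{+}(F_i)-\nu_1^{+}(F_i)^2)/\nu_1^{+}(F_i)$ and $1/\rho=(\nu_2^{+}(F_i)-\nu_1^{+}(F_i)^2)/\nu_1^{+}(F_i)^2$, collapses this to a rational expression in $\nu_1^{+}(F_i),\nu_2^{+}(F_i)$:
\[
\nu_3^{+}(G_i)=3\nu_1^{+}(F_i)\nu_2^{+}(F_i)-2(\nu_1^{+}(F_i))^3+\frac{2(\nu_2^{+}(F_i)-(\nu_1^{+}(F_i))^2)^2}{\nu_1^{+}(F_i)}.
\]
Discarding the non-positive term $-2(\nu_1^{+}(F_i))^3$ and adding $\nu_3^{+}(F_i)$ shows that $\beta_3^{+}(F_i,G_i)$ is at most an absolute multiple of the bracket in (\ref{nba}), which accounts for the entire factorial-moment factor.

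It then remains to replace $u_i$ by $\tilde u_i$. The two differ only in the $G_i$-term: $u_i=1-\tfrac12\max(\|(\dirac_1-\dirac)F_i\|,\|(\dirac_1-\dirac)G_i\|)$, whereas $\tilde u_i$ carries $(r_i\ln(1/\tilde p_i))^{-1/2}$ in place of $\|(\dirac_1-\dirac)G_i\|$. As $G_i$ is log-concave, hence unimodal, $\tfrac12\|(\dirac_1-\dirac)G_i\|=\max_kG_i\{k\}$, so the comparison $\tilde u_i\le u_i$ reduces to a concentration bound for the negative binomial, $\max_kG_i\{k\}\le \tfrac12(r_i\ln(1/\tilde p_i))^{-1/2}$ up to the constant absorbed into $C$. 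I would obtain this by Fourier inversion, $\max_kG_i\{k\}\le\frac1{2\pi}\int_{-\pi}^{\pi}|\w G_i(t)|\,dt$ with $|\w G_i(t)|=(1+2\tilde q_i(1-\cos t)/\tilde p_i^2)^{-\rho/2}$, using $1-\cos t\ge 2t^2/\pi^2$ and a Gaussian-type integral.

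The main obstacle is exactly this last estimate. The integral naturally returns the local-CLT scale $\tilde p_i(\rho\tilde q_i)^{-1/2}$, and converting it into the stated $\ln(1/\tilde p_i)$-form uniformly over $\tilde p_i\in(0,1)$ hinges on the elementary but essential inequality $\tilde p_i^2\ln(1/\tilde p_i)\le\tilde q_i=1-\tilde p_i$, together with a separate (trivial) check for small shape, where the right-hand side exceeds $1$; one must also keep track of the correct power of $r_i$ versus $r_i/n_i$ when packaging the single-summand concentration into $\tilde u_i$. Once $\tilde u_i\le u_i$ is secured, both $u_i^{-1}\le\tilde u_i^{-1}$ and $(\sum_i n_iu_i)^{-1/2}\le(\sum_i n_i\tilde u_i)^{-1/2}$ follow, and substituting these together with the factorial-moment bound into (\ref{BTcc}) yields (\ref{nba}).
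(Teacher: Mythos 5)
Your proposal has the same skeleton as the paper's (very terse) proof: check that the choice of $r_i,\tilde p_i$ matches the first two factorial moments so that (\ref{sal2}) holds with $s=2$, use $\Expect X_{1j}^3<\infty$ for (\ref{sal4}), compute $\nu_3^{+}(G_j)$, and substitute into (\ref{BTcc}). Your identity $\nu_3^{+}(G_i)=3\nu_1^{+}\nu_2^{+}-2(\nu_1^{+})^3+2(\nu_2^{+}-(\nu_1^{+})^2)^2/\nu_1^{+}$ is exactly right and is precisely the computation the paper leaves to the reader (``it remains to find $\nu_3^{+}(G_j)$''). The single point of divergence is the smoothness ingredient: the paper does not prove the negative-binomial concentration bound at all, but cites \cite{BaXi99}, p.~144, for
\[
\tfrac12\bigl\|G_k(\dirac_1-\dirac)\bigr\|\leqslant\biggl(\frac{\tilde p_k\nu_1^{+}(F_k)}{\tilde q_k}\ln\frac{1}{\tilde p_k}\biggr)^{-1/2},
\qquad \frac{\tilde p_k\nu_1^{+}(F_k)}{\tilde q_k}=\frac{r_k}{n_k},
\]
with constant exactly $1$. (Your worry about $r_k$ versus $r_k/n_k$ is a genuine ambiguity in the paper: the cited inequality and Remark (i) both correspond to the per-summand exponent $r_k/n_k$.)

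The weak spot in your replacement argument is the phrase ``up to the constant absorbed into $C$''. Substituting $\tilde u_k$ for $u_k$ in (\ref{BTcc}) is legitimate only because the right-hand side is decreasing in each $u$, which needs the pointwise inequality $\tilde u_k\leqslant u_k$, i.e. the concentration bound with constant $1$. If you prove it only with some $C'>1$, then in the regime where $(r_k\ln(1/\tilde p_k))^{-1/2}\approx 1/C'$ the quantity $1-\max(\cdot\,,C'(\cdot))$ can be arbitrarily small or zero while $\tilde u_k$ stays bounded away from $0$, and no multiplicative constant at the end can repair this. Your route can still be completed with constant $\leqslant 1$: when $r_k\ln(1/\tilde p_k)\leqslant1$ the bound is trivial; when $\rho=r_k/n_k\leqslant1$ the NB law is monotone, so $\max_mG_k\{m\}=\tilde p_k^{\rho}=\ee^{-x}\leqslant x^{-1/2}$ with $x=\rho\ln(1/\tilde p_k)$ (note your Fourier integral $\int(1+v^2)^{-\rho/2}\,\dd v$ diverges here, and NB is log-convex rather than log-concave for $\rho<1$, though still unimodal); and for $\rho>1$ the inversion integral plus $\tilde p^2\ln(1/\tilde p)\leqslant\tilde p\tilde q$ gives the claim. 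These case distinctions must actually be carried out; the paper sidesteps all of them by quoting the sharp inequality.
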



\begin{remark} (i) Note that
\[
r_k\ln\frac{1}{\tilde p_k}=\frac{(\nu_1^{+}(F_k))^2}{\nu_2^{+}(F_k)-
(\nu_1^{+}(F_k))^2}\ln\frac{\nu_2^{+}(F_k)-(\nu_1^{+}(F_k))^2+\nu
_1^{+}(F_k)}{\nu_1^{+}(F_k)}.
\]

(ii) Let $\nu_k^{+}(F_j)\asymp C, w_j\asymp C$. Then the accuracy of
approximation in (\ref{nba}) is of the order $O((n_1+\cdots+n_N)^{-1/2})$.
\end{remark}


\section{Sums of Markov Binomial rvs}

We already mentioned that it is not always natural to assume
independence of rvs. In this section, we still assume that
$S=w_1S_1+w_2S_2+\cdots+w_NS_N$ with mutually independent $S_i$. On the
other hand, we assume that each $S_i$ has a Markov Binomial (MB)
distribution, that is, $S_i$ is
a sum of Markov dependent Bernoulli variables.
Such a sum $S$ has a slightly more realistic interpretation in
actuarial mathematics. Assume, for example, that we have $N$ insurance
policy holders, $i$-th of whom can get ill during an insurance period
and be paid a claim $w_i$. The health of the policy holder depends on the
state of her/his health in the previous period. Therefore, we have a
natural two state (healthy, ill) Markov chain. Then $S_i$ is an
aggregate claim for $i$th insurance policy holder after $n_i$ periods,
meanwhile $S$ is an aggregate claim of all holders.
Limit behavior of the MB distribution is a popular topic among
mathematicians, discussed in numerous papers, see, for example, \cite
{CeVe10,Gan82,Aki93}, and references therein.

Let $0,\xi_{i1},\dots,\xi_{in_i},\dots$ , ($i=1,2,\dots,N$) be a Markov
chain with the transition probabilities
\begin{align*}
&\Prob(\xi_{ik}=1\,|\,\xi_{i,k-1}=1)=p_i, \qquad
\Prob(\xi_{ik}=0\,|\,\xi_{i,k-1}=1)=q_i,
\\
&\Prob(\xi_{i,k}=1\,|\,\xi_{i,k-1}=0)= \qubar_i,
\qquad\Prob(\xi_{ik}=0\,|\,\xi_{i,k-1}=0)= \pbar_i,
\\
& p_i+q_i=\qubar_i+ \pbar_i=1,
\quad\qquad p_i,\qubar_i\in (0,1),\quad k\in\NN.
\end{align*}
The distribution of $S_i=\xi_{i1}+\cdots+\xi_{in_i}$
$(n_i\in\NN)$ is called
the Markov binomial distribution with parameters $p_i,q_i,\pbar
_i,\qubar
_i,n_i$. The definition of a MB rv slightly differs from paper to
paper. We use the one from \cite{CeVe10}. Note that the Markov chain,
considered above, is not necessarily stationary.
Furthermore, the distribution of $w_iS_i$ is denoted by $H_{in}=\eL
(w_iS_i)$. For approximation of $H_{in}$ we use the signed compound
Poisson (CP) measure with matching mean and variance. Such signed CP
approximations usually outperform both the normal and CP
approximations, see, for example,
\cite{BaXi99,CeVe10,Ro03}.
Let
\[
\gamma_i=\frac{q_i\qubar_i}{q_i+\qubar_i},\qquad\w Y_i(t)=
\frac{q_i\ee
^{\ii w_it}}{1-p_i\ee^{\ii w_i t}}-1.
\]
Observe that $\w Y_i(t)+1$ is the characteristic function of the
geometric distribution. Let $Y_i$ be a measure corresponding to $\w Y_i(t)$.
For approximation of $H_{in}$ we use the signed CP measure $D_{in}$
%
\begin{align}
D_{in}&=\exp \biggl\{ \biggl(\frac{\gamma_i(\qubar
_i-p_i)}{q_i+\qubar
_i}+n_i
\gamma_i \biggr) Y_i
\nonumber
\\
&\quad-n_i \biggl(\frac{q_i\qubar_i^2}{(q_i+\qubar_i)^2} \biggl(p_i+
\frac
{q_i}{q_i+\qubar_i} \biggr)+\frac{\gamma_i^2}{2} \biggr) Y_i^2
\biggr\}. \label{Din}
\end{align}
The CP limit occurs when $n\qubar_i\to\tilde\lambda$, see, for
example, \cite{CeVe10}. Therefore, we assume $\qubar_i$ to be small,
though not necessarily vanishing. Let, for some fixed integer
$k_0\geqslant2$,
\begin{equation}
\qubar_i\,{\geqslant}\,\frac{1}{n^{k_0}},\qquad0\,{<}
\,p_i \,{\leqslant}\,\frac{1}{2},\qquad\qubar_i\,{
\leqslant}\,\frac{1}{30}, \qquad w_i\,{>}\,0,\qquad
n_i\,{\geqslant}\,1, \quad i\,{=}\,1,\dots,N. \label{cond1}
\end{equation}
In principle, the first assumption in (\ref{cond1}) can be dropped, but
then exponentially vanishing remainder terms appear in all results,
making them very complicated.

\begin{theorem} \label{TMARK} Let $H_{in}=\eL(w_iS_i)$ and let $D_{in}$
be defined by (\ref{Din}), $i=1,\dots,N$. Let the conditions stated in
(\ref{cond1}) be satisfied. Then
\begin{equation}
\label{MBT} \Biggl\vert\prod_{i=1}^NH_{in}-
\prod_{i=1}^ND_{in}
\Biggr\vert_K \leqslant C(N,k_0)\frac
{\max w_i}{\min w_i}\cdot
\frac{\sum_{i=1}^N\qubar_i(p_i+\qubar
_i)}{\sqrt{
\sum_{k=1}^N\max(n_k\qubar_k,1)}}.
\end{equation}
\end{theorem}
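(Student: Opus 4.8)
The plan is to reduce the product difference to single-index errors by telescoping, then to control each error through a Fourier-inversion estimate from which the aggregate smoothing factor $\bigl(\sum_k\max(n_k\qubar_k,1)\bigr)^{-1/2}$ is extracted from the compound Poisson structure of the $D_{in}$. First I would write
\[
\prod_{i=1}^N H_{in}-\prod_{i=1}^N D_{in}=\sum_{i=1}^N\Bigl(\prod_{j<i}H_{jn}\Bigr)(H_{in}-D_{in})\Bigl(\prod_{j>i}D_{jn}\Bigr),
\]
so that, by the triangle inequality for $|\cdot|_K$, it suffices to bound each summand; the numerator $\qubar_i(p_i+\qubar_i)$ will arise from the single remaining difference $H_{in}-D_{in}$ in the $i$-th term.

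Because the weights $w_i>0$ are in general distinct, $S$ is not concentrated on a lattice, so the standard integer-lattice inversion does not apply; instead I would invoke a smoothing inequality of Esseen type (as in \cite{SC88}), bounding each summand, up to a concentration-function remainder controlled by (\ref{cond1}), by
\[
\Bigl|\Bigl(\prod_{j<i}H_{jn}\Bigr)(H_{in}-D_{in})\Bigl(\prod_{j>i}D_{jn}\Bigr)\Bigr|_K\leqslant C\int\Bigl|\frac{\w H_{in}(t)-\w D_{in}(t)}{t}\Bigr|\prod_{j\neq i}\bigl|\w W_{jn}(t)\bigr|\,\dd t,
\]
where $\w W_{jn}$ equals $\w H_{jn}$ for $j<i$ and $\w D_{jn}$ for $j>i$. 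The difference factor vanishes at the origin (both transforms equal $1$ at $t=0$), so the $1/t$ singularity is integrable.

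Two families of characteristic-function estimates then feed this integral. For the pointwise difference I would write $\w H_{in}(t)$ in closed form via the $2\times2$ transfer matrix of the chain (as in \cite{CeVe10}), take logarithms, and Taylor-expand both $\log\w H_{in}$ and the exponent defining $\w D_{in}$ in powers of $\w Y_i(t)$; the mean-and-variance matching built into (\ref{Din}) cancels the leading terms and leaves a discrepancy of size $C\qubar_i(p_i+\qubar_i)$ times an exponential-decay factor comparable to $|\w D_{in}(t)|$. For the decay I would use $\gamma_j\asymp\qubar_j$ and $\mathrm{Re}\,\w Y_j(t)\leqslant -c\,g_j(w_jt)$ under (\ref{cond1}), with $g_j\geqslant0$ vanishing only at $0$, to obtain $|\w D_{jn}(t)|\leqslant\exp\{-c\,n_j\qubar_j\,g_j(w_jt)\}$ and an analogous bound for $|\w H_{jn}(t)|$. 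Since the difference factor itself carries such decay, \emph{all} $N$ indices contribute to smoothing, and the integrand is dominated by $\qubar_i(p_i+\qubar_i)\exp\{-c\sum_{k=1}^N n_k\qubar_k\,g_k(w_kt)\}/|t|$; integrating, a Gaussian/Laplace-type estimate near the origin gives the total scale $\sum_k n_k\qubar_k$, while the lower bound $\qubar_k\geqslant n^{-k_0}$ keeps the geometric spread nondegenerate and supplies a unit of smoothing when $n_k\qubar_k$ is small, which is precisely the role of $\max(\cdot,1)$. Summing over $i$ then produces $\sum_i\qubar_i(p_i+\qubar_i)$ in the numerator.

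The main obstacle will be the genuinely non-lattice situation created by the distinct weights: the smoothing furnished by the $k$-th factor lives on scale $1/w_k$, so when the $w_k$ differ the per-factor decays are misaligned, and quantifying this mismatch — so that the product of decays still yields the clean aggregate square-root rate — is the delicate point that produces exactly the factor $\max_jw_j/\min_jw_j$. The closed-form expansion of $\w H_{in}$ and its comparison with $\w D_{in}$ is technically the most laborious ingredient, but it is essentially a cumulant computation that can lean on \cite{CeVe10}.
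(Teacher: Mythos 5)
Your overall architecture --- telescoping, a Le Cam/Esseen-type smoothing inequality with a $1/t$ integral plus a concentration-function remainder, aggregate exponential decay yielding the factor $(\sum_k\max(n_k\qubar_k,1))^{-1/2}$, and $\max w_i/\min w_i$ arising from the mismatch of scales --- is essentially the paper's (the paper telescopes, decomposes each factor via Lemma \ref{MBlem}, and applies Lemma \ref{ad} with $W=Y_i$ and an auxiliary symmetric compound Poisson law $V$). The genuine gap is in your key comparison step. You claim that taking logarithms of the transfer-matrix formula and Taylor-expanding shows that the mean-and-variance matching built into (\ref{Din}) leaves a discrepancy of size $C\qubar_i(p_i+\qubar_i)$ times a decay factor comparable to $\vert\w D_{in}(t)\vert$. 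That is false as stated: $\w H_{in}(t)=A(t)\lambda_1^{n_i}(t)+B(t)\lambda_2^{n_i}(t)$ has a two-eigenvalue structure (the chain starts at the non-stationary state $0$), and the transient term $B\lambda_2^{n_i}$ is of order $\ee^{-C_in_i}$ but is \emph{not} proportional to $\qubar_i$; no cumulant expansion of $\log\w H_{in}$ captures it. If, say, $\qubar_i=\ee^{-n_i^2}$, this transient term dominates $C\qubar_i(p_i+\qubar_i)$, and indeed the bound (\ref{MBT}) would be false without the first inequality in (\ref{cond1}). That assumption, $\qubar_i\geqslant n^{-k_0}$, exists precisely to absorb these exponentially small terms --- the paper proves $\ee^{-C_in_i}\leqslant C(k_0)\qubar_i/\sqrt{\max(n_i\qubar_i,1)}$ --- and it is the only reason the constant in (\ref{MBT}) depends on $k_0$, a dependence your argument never produces.

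Relatedly, you misattribute the role of that assumption: the $\max(n_k\qubar_k,1)$ in the denominator does not come from the lower bound on $\qubar_k$. It comes from the elementary step $\exp\{-cn_k\qubar_k\sin^2(tw_k/2)\}\leqslant\ee^{c}\exp\{-c\max(n_k\qubar_k,1)\sin^2(tw_k/2)\}$, i.e. each factor always supplies at least one ``unit'' of smoothing at the price of a constant absorbed into $C(N)$ (equivalently, from the trivial bound $Q\leqslant1$). Once you (i) split $H_{in}-D_{in}$, and each $H_{jn}$ and $D_{jn}$, into a compound Poisson main part plus an $O(\ee^{-C_jn_j})$ transient (this is Lemma \ref{MBlem}, imported from \cite{CeVe10}), and (ii) invoke $\qubar_i\geqslant n^{-k_0}$ to absorb the transients, your Fourier route closes and essentially reproduces the paper's proof. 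Note also a smaller technical point: the concentration-function remainder in the smoothing lemma requires a genuine probability distribution in the role of $U$, whereas $\prod_{j>i}D_{jn}$ is a signed measure, so its signed part must first be pulled out in total variation, as the paper does.
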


\begin{remark}
Let all $\qubar_i\geqslant C$, $i=1,\dots,N$. Then, obviously, the
right-hand side of (\ref{MBT}) is majorized by
\[
C(N,k_0)\frac{\max w_i}{\min w_i}\cdot\frac{1}{\sqrt{
\max n_k}}.
\]
Therefore, even in this case, the result is comparable with the
Berry--Esseen theorem.
\end{remark}
%
\section{Auxiliary results}

\begin{lemma}\label{ad} Let $h>0$, $W\in\M$, $W\{\RR\}=0$, $U\in\F
$ and
$\vert\widehat U(t)\vert\leqslant C\widehat V(t)$, for
$\vert t\vert\leqslant1/h$ and some symmetric distribution $V$ having
non-negative characteristic function. Then
\begin{align*}
\vert W U\vert_K&\leqslant C\int_{\vert t\vert\leqslant1/h} \biggl\vert
\frac{\widehat W(t)\widehat U(t)}{t} \biggr\vert\,\dd t + C \llVert W \rrVert Q(U, h)
\nonumber
\\
&\leqslant C \biggl(\sup_{\vert t\vert\leqslant1/h} \frac{\vert
\widehat W(t)\vert}{\vert t\vert}\cdot
\frac{1}{h}+ \llVert W \rrVert \biggr)Q(V, h). 
\end{align*}
\end{lemma}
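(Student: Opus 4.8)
The plan is to reduce the Kolmogorov norm of the convolution $WU$ to an integral of its Fourier--Stieltjes transform, exploiting that $W$ has total mass zero so that $\w W(0)=0$ and $\w W(t)/t$ is well-behaved near the origin. First I would invoke a smoothing-type inequality (an Esseen-type bound adapted to signed measures) which estimates $\vert WU\vert_K$ by an integral of $\vert\w{WU}(t)/t\vert$ over the range $\vert t\vert\leqslant 1/h$, plus a remainder that measures the concentration of $U$ at scale $h$. Since $WU$ is concentrated on $\RR$ and $U$ is a genuine probability distribution, the tail of the inversion is naturally controlled by L\'evy's concentration function $Q(U,h)$; this yields the first displayed bound
\[
\vert WU\vert_K\leqslant C\int_{\vert t\vert\leqslant 1/h}\biggl\vert\frac{\w W(t)\w U(t)}{t}\biggr\vert\,\dd t+C\llVert W\rrVert\,Q(U,h).
\]
Here the condition $W\{\RR\}=0$ is what makes $\w W(t)/t$ integrable at $t=0$: without it the factor $1/t$ would produce a non-integrable singularity.

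For the second inequality I would pass from $U$ to the dominating distribution $V$. The hypothesis $\vert\w U(t)\vert\leqslant C\w V(t)$ for $\vert t\vert\leqslant 1/h$ lets me replace $\vert\w U(t)\vert$ by $\w V(t)$ inside the integral, giving
\[
\int_{\vert t\vert\leqslant 1/h}\biggl\vert\frac{\w W(t)\w U(t)}{t}\biggr\vert\,\dd t\leqslant C\sup_{\vert t\vert\leqslant 1/h}\frac{\vert\w W(t)\vert}{\vert t\vert}\int_{\vert t\vert\leqslant 1/h}\w V(t)\,\dd t.
\]
Since $V$ is symmetric with non-negative characteristic function, $\w V$ is a non-negative even function, and the standard concentration-function lower bound $\int_{\vert t\vert\leqslant 1/h}\w V(t)\,\dd t\leqslant (C/h)Q(V,h)$ converts this integral into $(C/h)Q(V,h)$. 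This produces the first term $C\,h^{-1}\sup_{\vert t\vert\leqslant1/h}\vert\w W(t)\vert/\vert t\vert\cdot Q(V,h)$ of the final bound.

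It remains to handle the concentration remainder $\llVert W\rrVert\,Q(U,h)$ from the first step and merge it into the $Q(V,h)$ form. I would argue that the domination $\vert\w U(t)\vert\leqslant C\w V(t)$ forces a comparison of concentration functions, $Q(U,h)\leqslant C\,Q(V,h)$, via the concentration-function inequality that bounds $Q(F,h)$ above and below by integrals of $\vert\w F\vert$ over $\vert t\vert\leqslant 1/h$ (the two-sided estimate $Q(F,h)\asymp h\int_{\vert t\vert\leqslant1/h}\vert\w F(t)\vert\,\dd t$ up to constants). Factoring out $Q(V,h)$ from both terms then yields the second displayed inequality. The main obstacle I anticipate is the careful justification of the smoothing inequality for a signed measure together with the transfer $Q(U,h)\leqslant C\,Q(V,h)$, since both rest on the precise two-sided relation between concentration functions and truncated Fourier integrals; getting the constants and the truncation range $\vert t\vert\leqslant 1/h$ to line up is the delicate part, whereas the algebraic manipulations are routine.
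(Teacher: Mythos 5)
Your proposal is correct and takes essentially the same route as the paper: the paper offers no proof of Lemma \ref{ad}, presenting it as a version of Le Cam's smoothing inequality (citing Lemma 9.3 of \cite{Ce16} and Lemma 3, p.~402, of \cite{LeCam86}), which is precisely the smoothing bound you invoke for the first display, and your derivation of the second display --- replacing $\vert\widehat U(t)\vert$ by $C\widehat V(t)$, bounding $\int_{\vert t\vert\leqslant 1/h}\widehat V(t)\,\dd t\leqslant Ch^{-1}Q(V,h)$, and transferring $Q(U,h)\leqslant CQ(V,h)$ --- is exactly the intended use of the concentration-function inequalities (\ref{ac1}) and (\ref{ac5}). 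One small caution: the ``two-sided'' relation $Q(F,h)\asymp h\int_{\vert t\vert\leqslant 1/h}\vert\widehat F(t)\vert\,\dd t$ you cite is valid only when $\widehat F\geqslant 0$ (the direction (\ref{ac5}) needs non-negativity), but your argument in fact applies each direction only under its proper hypothesis ((\ref{ac1}) to $U$, (\ref{ac5}) to $V$), so the proof stands.
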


Lemma \ref{ad} is a version of Le Cam's smoothing inequality, see
Lemma 9.3 in \cite{Ce16} and Lemma 3 on p. 402 in \cite{LeCam86}.

%
\begin{lemma}\label{ac} Let $F \in\F$, $h>0$ and $a>0$. Then
\begin{align}
Q(F, h)&\leqslant \biggl(\frac{96}{95} \biggr)^2h\int
_{\vert t\vert\leqslant1/h} \bigl\vert\widehat F(t) \bigr\vert\, \integrald t, \label{ac1}
\\
Q(F, h)&\leqslant \biggl(1+ \biggl(\frac{h}{a} \biggr) \biggr)
Q(F, a), \label{ac3}
\\
Q \bigl(\exp\bigl\{a(F-I)\bigr\}, h \bigr)&\leqslant\frac{C}{\sqrt{aF \{\vert
x\vert>h \} }}.
\label{ac4}
\end{align}
If, in addition, $\widehat F(t)\geqslant0$, then
\begin{equation}
h \int_{\vert t\vert\leqslant1/h} \bigl\vert\widehat F(t)\bigr\vert\, \integrald t \leqslant
CQ(F, h). \label{ac5}
\end{equation}
\end{lemma}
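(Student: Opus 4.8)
The four inequalities split into three elementary covering/duality facts and one genuinely analytic estimate, so I would treat them in increasing order of difficulty. The plan is to dispose of (\ref{ac3}) first by a covering argument: any interval $[x,x+h]$ lies in the union of the $\lceil h/a\rceil$ length-$a$ intervals $[x+ja,x+(j+1)a]$, $0\leqslant j<\lceil h/a\rceil$. Summing the $F$-masses (over-counting at shared endpoints only helps, since we want an upper bound) and using $\lceil h/a\rceil\leqslant 1+h/a$ gives $F\{[x,x+h]\}\leqslant(1+h/a)Q(F,a)$; the supremum over $x$ is (\ref{ac3}).

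For (\ref{ac1}) and (\ref{ac5}) I would run dual Fourier--smoothing arguments. For (\ref{ac1}), fix a nonnegative majorant $g$ with $g(v)\geqslant1$ on $[0,h]$ and $\w g$ supported in $[-1/h,1/h]$; then $F\{[x,x+h]\}\leqslant\int g(u-x)\,F(\dd u)$, and Fourier inversion gives
\[
\int g(u-x)\,F(\dd u)=\frac{1}{2\pi}\int_{|t|\leqslant1/h}\w g(t)\ee^{\ii tx}\w F(-t)\,\dd t,
\]
whose modulus is at most $\frac{1}{2\pi}\|\w g\|_\infty\int_{|t|\leqslant1/h}|\w F(t)|\,\dd t$; rescaling $g$ to length $h$ and optimizing the majorant produces the explicit constant $(96/95)^2$. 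For (\ref{ac5}) the roles reverse: with the Fej\'er kernel $\Phi_T$ (transform $(1-|t|/T)_+\geqslant0$), Parseval gives $\int_{|t|\leqslant T}(1-|t|/T)\w F(t)\,\dd t=2\pi\int\Phi_T\,\dd F$; taking $T=2/h$ and using $\w F\geqslant0$ with $(1-|t|/T)\geqslant1/2$ on $|t|\leqslant1/h$ bounds $\int_{|t|\leqslant1/h}\w F$ above by $C\int\Phi_{2/h}\,\dd F$, and the decay $\Phi_{2/h}(x)\leqslant Ch^{-1}(1+(x/h)^2)^{-1}$ yields $\int\Phi_{2/h}\,\dd F\leqslant Ch^{-1}Q(F,h)$ after splitting $\RR$ into length-$h$ blocks.

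The substantive claim is (\ref{ac4}). I would first isolate the large jumps. Writing $p=F\{|x|>h\}$ (the bound is trivial if $p=0$) and splitting $F=F'+F''$ into its restrictions to $\{|x|\leqslant h\}$ and $\{|x|>h\}$, the exponent factorizes as
\[
a(F-I)=\bigl[a\bigl(F'-(1-p)I\bigr)\bigr]+\bigl[a(F''-pI)\bigr],
\]
so $\exp\{a(F-I)\}=V_1V_2$ with $V_1$ a (compound Poisson) probability measure and $V_2=\exp\{\lambda(\tilde G-I)\}$, where $\lambda=ap=aF\{|x|>h\}$ and $\tilde G=F''/p$ is supported on $\{|x|>h\}$. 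Since $Q(V_1V_2,h)=\sup_x\int V_2\{[x-y,x-y+h]\}\,V_1(\dd y)\leqslant Q(V_2,h)$, it suffices to prove $Q(V_2,h)\leqslant C/\sqrt\lambda$. Applying (\ref{ac1}) to $V_2$ and using $|\w{V_2}(t)|=\exp\{-\lambda(1-Re\,\w{\tilde G}(t))\}$ reduces this to the integral estimate $\int_{|t|\leqslant1/h}\ee^{-\lambda\varphi(t)}\,\dd t\leqslant C/(h\sqrt\lambda)$, where $\varphi(t)=\int_{|x|>h}(1-\cos tx)\,\tilde G(\dd x)$.

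This last estimate is where the real difficulty lies and is the step I expect to be the main obstacle. One cannot simply bound $\varphi(t)\geqslant c(th)^2$, because $\w{\tilde G}$ may return near the unit circle at nonzero frequencies (already for $\tilde G=\dirac_{x_0}$). The honest route is a sublevel-set estimate: the function $d(t):=\sqrt{2\varphi(t)}=\|1-\ee^{\ii tx}\|_{L^2(\tilde G)}$ is even and subadditive (by Minkowski, since $|1-\ee^{\ii(t+\tau)x}|\leqslant|1-\ee^{\ii tx}|+|1-\ee^{\ii\tau x}|$), so $\{|t|\leqslant1/h:\varphi(t)\leqslant s\}$ is an approximate subgroup whose Lebesgue measure one shows to be $O(\sqrt{s}/h)$ uniformly over all $\tilde G$ concentrated on $\{|x|>h\}$. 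Feeding this into the layer-cake identity
\[
\int_{|t|\leqslant1/h}\ee^{-\lambda\varphi(t)}\,\dd t=\lambda\int_0^\infty \ee^{-\lambda s}\,\mathrm{Leb}\bigl\{|t|\leqslant1/h:\varphi(t)\leqslant s\bigr\}\,\dd s
\]
gives $O(1/(h\sqrt\lambda))$, hence $Q(V_2,h)\leqslant C/\sqrt\lambda$ and (\ref{ac4}). Controlling that sublevel measure is exactly the anticoncentration mechanism behind Esseen-type inequalities, and is the only place in the lemma requiring more than bookkeeping.
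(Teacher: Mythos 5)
The paper never proves this lemma at all: it is dispatched with a pointer to Chapter 1 of \cite{Pet95} and Section 1.5 of \cite{Ce16}, so the comparison here is between your argument and the classical proofs in those references. Your proofs of (\ref{ac3}) (covering a length-$h$ interval by $\lceil h/a\rceil$ intervals of length $a$), of (\ref{ac1}) (Esseen smoothing with a nonnegative majorant whose Fourier transform lives on $[-1/h,1/h]$), and of (\ref{ac5}) (Fej\'er kernel, Parseval, positivity of $\w F$, summation over length-$h$ blocks) are exactly the textbook arguments and are sound; the only detail taken on faith is that optimizing the majorant in (\ref{ac1}) yields the specific constant $(96/95)^2$, which is harmless since any absolute constant serves the paper's needs.

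The genuine gap is in (\ref{ac4}), and it sits exactly where you flagged it. Your reduction is correct: splitting off the mass on $\{\vert x\vert>h\}$, using monotonicity of $Q$ under convolution with the probability measure $V_1$, and applying (\ref{ac1}) to $V_2$ leaves you needing $\int_{\vert t\vert\leqslant1/h}\ee^{-\lambda\varphi(t)}\,\dd t\leqslant C/(h\sqrt{\lambda})$. But the sublevel-set bound $\mathrm{Leb}\{\vert t\vert\leqslant1/h:\varphi(t)\leqslant s\}=O(\sqrt{s}/h)$ that you then invoke \emph{is} the entire content of the inequality, and you do not prove it; moreover, the mechanism you propose cannot prove it, because evenness, subadditivity of $d(t)=\Vert1-\ee^{\ii tx}\Vert_{L^2(\tilde G)}$, and $d(0)=0$ hold for every probability measure $\tilde G$ --- including ones with $d\equiv0$ --- for which the conclusion is false. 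Any valid argument must use the support condition $\tilde G\{\vert x\vert>h\}=1$, and your sketch uses it only in the statement of the claim, never in the argument (the natural Brunn--Minkowski iteration $A_\delta+A_\delta\subseteq A_{2\delta}$ also fails, since the window $[-T,T]$ doubles along with the level and nothing accumulates). The standard fix is convexity rather than additive structure: by Jensen's inequality,
\begin{equation*}
\ee^{-\lambda\varphi(t)}=\exp\biggl\{-\lambda\int(1-\cos tx)\,\tilde G(\dd x)\biggr\}\leqslant\int\ee^{-\lambda(1-\cos tx)}\,\tilde G(\dd x),
\end{equation*}
then Fubini, and for each fixed $\vert x\vert>h$ the substitution $u=tx$ together with periodicity of $u\mapsto\ee^{-\lambda(1-\cos u)}$ and the bound $1-\cos u\geqslant2u^{2}/\pi^{2}$ on $[-\pi,\pi]$ give
\begin{equation*}
\int_{\vert t\vert\leqslant1/h}\ee^{-\lambda(1-\cos tx)}\,\dd t=\frac{2}{\vert x\vert}\int_{0}^{\vert x\vert/h}\ee^{-\lambda(1-\cos u)}\,\dd u\leqslant\frac{2}{\vert x\vert}\biggl(\frac{\vert x\vert}{2\pi h}+1\biggr)\int_{-\pi}^{\pi}\ee^{-2\lambda u^{2}/\pi^{2}}\,\dd u\leqslant\frac{C}{h\sqrt{\lambda}},
\end{equation*}
where the last step uses $\vert x\vert>h$ to control the number of periods, i.e. the number of near-zeros of $1-\cos tx$ in $[-1/h,1/h]$. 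This is precisely where the support condition does its work, and it closes (\ref{ac4}); it also proves your sublevel-set claim as a corollary, so the approximate-subgroup detour is both unproved and unnecessary.
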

Lemma \ref{ac} contains well-known properties of Levy's
concentration function, see, for example, Chapter 1 in \cite{Pet95} or
Section 1.5 in \cite{Ce16}.

Expansion in left-hand and right-hand factorial moments for
Fourier--Stieltjes transforms is given
in \cite{SC88}. Here we need its analogue for distributions.
\begin{lemma}\label{bee} Let $F\in\F_Z$ and, for some $s\geqslant1$,
$\nu_{s+1}^{+}(F)+\nu_{s+1}^{-}(F)<\infty$. Then
\begin{align}
F&=\dirac+\sum_{m=1}^s
\frac{\nu_m^{+}(F)}{m!}(\dirac_1-\dirac)^m+\sum
_{m=1}^s\frac{\nu_m^{-}(F)}{m!}(\dirac_{-1}-
\dirac)^m
\nonumber
\\
&\quad+ \frac{\nu_{s+1}^{+}(F)+\nu_{s+1}^{-}(F)}{(s+1)!}(\dirac _1-\dirac)^{s+1}
\varTheta. \label{nuF}
\end{align}
\end{lemma}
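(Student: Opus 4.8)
The plan is to establish the expansion \eqref{nuF} by working directly with the measure on the integer lattice, exploiting the structure of the finite difference operator $(\dirac_1-\dirac)$ and its interaction with factorial moments. First I would recall that for a lattice distribution $F\in\F_Z$, the signed measure $(\dirac_1-\dirac)^m$ is the $m$-fold convolution power of the backward difference, whose Fourier--Stieltjes transform is $(\ee^{\ii t}-1)^m$. The key observation is that $\nu_m^{+}(F)/m!$ is precisely the coefficient of $(\ee^{\ii t}-1)^m$ in the Taylor-type expansion of $\w F(t)$ generated by the positive part of $F$, and analogously $\nu_m^{-}(F)/m!$ corresponds to $(\ee^{-\ii t}-1)^m$ via the reflected measure. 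So the natural route is to prove the transformed identity and then invert, but since the statement is an identity of measures (not merely transforms), I would prefer to verify it directly coefficient-by-coefficient on the lattice.

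The main steps I would carry out are as follows. I would split $F=F^{+}+F\{0\}\dirac+F^{-}$, where $F^{+}$ collects the mass on positive integers and $F^{-}$ the mass on negative integers, and treat each piece separately since $(\dirac_1-\dirac)^m$ handles the positive side and $(\dirac_{-1}-\dirac)^m$ the negative side symmetrically. For the positive part, I would use the combinatorial identity expressing $\dirac_k$ in terms of the basis $\{(\dirac_1-\dirac)^m\}$: writing $\dirac_k=(\dirac+(\dirac_1-\dirac))^k=\sum_{m=0}^k\binom{k}{m}(\dirac_1-\dirac)^m$ by the binomial theorem (valid since $\dirac$ is the convolution identity and the two measures commute). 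Summing against $F\{k\}$ and interchanging the order of summation converts the falling factorial sums $\sum_k k(k-1)\cdots(k-m+1)F\{k\}=\nu_m^{+}(F)$ into the coefficients appearing in \eqref{nuF}. The truncation at $s$ produces a remainder consisting of all terms with $m\geqslant s+1$, and I would bound this remainder by factoring out $(\dirac_1-\dirac)^{s+1}$ and controlling the total variation norm of the residual factor using $\|\dirac_1-\dirac\|=2$ together with the finiteness of $\nu_{s+1}^{+}(F)$.

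The hard part will be the remainder estimate, specifically verifying that the tail $\sum_{m=s+1}^{\infty}$ can be written as a single term $\frac{\nu_{s+1}^{+}(F)+\nu_{s+1}^{-}(F)}{(s+1)!}(\dirac_1-\dirac)^{s+1}\varTheta$ with $\|\varTheta\|\leqslant1$. The delicate point is that the higher-order differences $(\dirac_1-\dirac)^m$ for $m>s+1$ must be reorganized so that a common factor $(\dirac_1-\dirac)^{s+1}$ is extracted and the leftover measure has controlled norm; here I would use the identity $\binom{k}{m}=\binom{k}{s+1}\binom{k-s-1}{m-s-1}\big/\binom{m}{s+1}$-type rearrangements, or more cleanly, write the positive-side remainder as $\sum_{k\geqslant s+1}F\{k\}\sum_{m\geqslant s+1}\binom{k}{m}(\dirac_1-\dirac)^m$ and factor $(\dirac_1-\dirac)^{s+1}$ out of the inner sum. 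The resulting normalized coefficient must be shown to have absolute total variation at most $\nu_{s+1}^{+}(F)/(s+1)!$, which follows because $\binom{k}{s+1}/k^{\,[s+1]}=1/(s+1)!$ where $k^{[s+1]}=k(k-1)\cdots(k-s)$, and the convolution powers that remain are probability-like up to sign with norm bounded by the appropriate falling factorial. Once the positive and negative remainders are each controlled in this way, combining them under a single $\varTheta$ (again using $\|\cdot\|$-subadditivity and the convention on $\varTheta$) completes the proof.
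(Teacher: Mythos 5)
Your skeleton (binomial expansion $\dirac_k=(\dirac+(\dirac_1-\dirac))^k$, splitting $F$ into its positive, zero and negative parts, treating the negative side by reflection) is a legitimate from-scratch route; it is essentially what lies behind the result the paper invokes, since the paper does not reprove anything but simply cites Lemma 2.1 of \cite{Ce16} for measures on non-negative integers, writes $F$ as a mixture of a distribution on the non-negative integers and one on the negative integers, and merges the remainders via $(\dirac_{-1}-\dirac)=\dirac_{-1}(\dirac-\dirac_1)=(\dirac_1-\dirac)\varTheta$. However, your proposal has a genuine gap exactly at the point you flag as ``the hard part''. For fixed $k$ the tail is
\[
\sum_{m=s+1}^{k}\binom{k}{m}(\dirac_1-\dirac)^m=(\dirac_1-\dirac)^{s+1}\sum_{m=s+1}^{k}\binom{k}{m}(\dirac_1-\dirac)^{m-s-1},
\]
and your claim that the leftover factor is ``probability-like up to sign'' with norm bounded by the appropriate falling factorial is false: $\|(\dirac_1-\dirac)^{j}\|=2^{j}$, so termwise estimation of the inner sum only gives $\sum_{m=s+1}^{k}\binom{k}{m}2^{m-s-1}$, which is of order $3^{k}$ — exponentially larger than the $\binom{k}{s+1}$ you need. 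No rearrangement of binomial coefficients repairs this; the required bound $\|\varTheta_k\|\leqslant1$ in $\dirac_k-\sum_{m=0}^{s}\binom{k}{m}(\dirac_1-\dirac)^m=\binom{k}{s+1}(\dirac_1-\dirac)^{s+1}\varTheta_k$ rests on cancellation between the terms, not on their individual sizes, and your plan never captures that cancellation.

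The missing idea is Taylor's formula with integral remainder, applied in the convolution algebra: with $u=\dirac_1-\dirac$,
\[
\dirac_k=(\dirac+u)^k=\sum_{m=0}^{s}\binom{k}{m}u^m+(s+1)\binom{k}{s+1}\,u^{s+1}\int_0^1(1-\tau)^s(\dirac+\tau u)^{k-s-1}\,\dd\tau,
\]
where the decisive observation is that $\dirac+\tau u=(1-\tau)\dirac+\tau\dirac_1$ is a \emph{probability} measure for $\tau\in[0,1]$, so its convolution powers have total variation norm $1$ and the integral factor has norm at most $(s+1)\int_0^1(1-\tau)^s\,\dd\tau=1$. Summing over $k$ with weights $F\{k\}$ then yields the one-sided expansion with the exact constant $\nu_{s+1}^{+}(F)/(s+1)!$, and it also avoids your interchange of infinite sums, which is not available in general because under the hypothesis the factorial moments of order greater than $s+1$ may be infinite. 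Finally, note that when you merge the two one-sided remainders, norm subadditivity alone does not suffice: the negative-side remainder naturally carries the factor $(\dirac_{-1}-\dirac)^{s+1}$, and you must first rewrite $(\dirac_{-1}-\dirac)^{s+1}=(-1)^{s+1}\dirac_{-(s+1)}(\dirac_1-\dirac)^{s+1}$ — precisely the identity used in the paper — before both terms share the factor $(\dirac_1-\dirac)^{s+1}$ and can be absorbed into a single $\varTheta$.
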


\begin{proof}
For measures, concentrated on non-negative integers, (\ref{nuF}) is given
in \cite{Ce16}, Lemma 2.1. Observe that distribution $F$ can be
expressed as a mixture $F=p^{+}F^{+}+p^{-}F^{-}$ of distributions
$F^{+}$, $F^{-}$ concentrated on non-negative and negative integers,
respectively. Then Lemma 2.1 from \cite{Ce16} can be applied in turn to
$F^{+}$ and to $F^{-}$ (with $\dirac_{-1}$). The remainder terms can be
combined, since $(\dirac_{-1}-\dirac)=\dirac_{-1}(\dirac-\dirac
_1)=(\dirac_1-\dirac)\varTheta$.
\end{proof}

\begin{lemma}\label{f-g} Let $F,G\in\F_Z$ and, for some $s\geqslant1$,
$\nu_j^{+}(F)=\nu_{j}^{+}(G)$, $\nu_j^{-}(F)=\nu_j^{-}(G)$,
$(j=1,2,\dots,s)$. If
$\beta_{s+1}^{+}(F,G)+\beta_{s+1}(F,G)<\infty$, then
\[
F-G=\frac{\beta_{s+1}^{+}(F,G)+\beta_{s+1}^{-}(F,G)}{(s+1)!}(\dirac _1-\dirac)^{s+1}\varTheta.
\]
If, in addition, $\beta_{s+2}^{+}(F,G)+\beta_{s+2}(F,G)<\infty$ and
$s$ is even, then
\begin{align*}
F-G&=\frac{\beta_{s+1}^{+}(F,G)-\beta_{s+1}^{-}(F,G)}{(s+1)!}(\dirac _1-\dirac)^{s+1}
\\
&\quad+ \bigl[\beta_{s+2}^{+}(F,G)+\beta_{s+2}^{-}(F,G)+
\beta_{s+1}^{-}(F,G) \bigr](\dirac_1-
\dirac)^{s+2}\varTheta C(s).
\end{align*}
\end{lemma}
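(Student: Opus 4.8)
The plan is to derive both displays from the distributional factorial-moment expansion of Lemma \ref{bee}, applied once to $F$ and once to $G$, and then to subtract. The one algebraic fact driving everything is the factorization $\dirac_{-1}-\dirac=-\dirac_{-1}(\dirac_1-\dirac)$, which gives $(\dirac_{-1}-\dirac)^{m}=(-1)^{m}\dirac_{-m}(\dirac_1-\dirac)^{m}$, together with the norm identity $\dirac_{-m}-\dirac=m(\dirac_1-\dirac)\varTheta$ (since $\dirac_{-m}-\dirac=-(\dirac_1-\dirac)\sum_{k=1}^{m}\dirac_{-k}$ and $\|\sum_{k=1}^{m}\dirac_{-k}\|=m$). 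Writing $\dirac_{-m}=\dirac+(\dirac_{-m}-\dirac)$ then lets me trade a power of $(\dirac_{-1}-\dirac)$ for the same power of $(\dirac_1-\dirac)$ at the cost of a higher-order correction; this is exactly why both conclusions can be phrased purely in powers of $(\dirac_1-\dirac)$.

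For the first display I apply Lemma \ref{bee} at order $s$ to each of $F$ and $G$ (both expansions exist because $\beta_{s+1}^{+}+\beta_{s+1}^{-}<\infty$). By the matching hypotheses $\nu_j^{+}(F)=\nu_j^{+}(G)$ and $\nu_j^{-}(F)=\nu_j^{-}(G)$ for $j\leqslant s$, the sums $\sum_{m=1}^{s}$ agree term by term and cancel in $F-G$, leaving only the two order-$(s+1)$ remainders, $\frac{\nu_{s+1}^{+}(F)+\nu_{s+1}^{-}(F)}{(s+1)!}(\dirac_1-\dirac)^{s+1}\varTheta_1$ from $F$ and its $G$-analogue. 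Since $\nu^{\pm}\geqslant0$ and $\|a\varTheta_1-b\varTheta_2\|\leqslant a+b$ for $a,b\geqslant0$, these merge into a single $(\dirac_1-\dirac)^{s+1}\varTheta$ with coefficient $\nu_{s+1}^{+}(F)+\nu_{s+1}^{-}(F)+\nu_{s+1}^{+}(G)+\nu_{s+1}^{-}(G)=\beta_{s+1}^{+}(F,G)+\beta_{s+1}^{-}(F,G)$, divided by $(s+1)!$, which is the first claim.

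For the second display I run the same computation one order deeper, invoking Lemma \ref{bee} at order $s+1$ (permissible since $\beta_{s+2}^{+}+\beta_{s+2}^{-}<\infty$). The terms up to order $s$ again cancel, and the surviving order-$(s+1)$ part is $\frac{\nu_{s+1}^{+}(F)-\nu_{s+1}^{+}(G)}{(s+1)!}(\dirac_1-\dirac)^{s+1}+\frac{\nu_{s+1}^{-}(F)-\nu_{s+1}^{-}(G)}{(s+1)!}(\dirac_{-1}-\dirac)^{s+1}$, accompanied by two genuine order-$(s+2)$ remainders. Here the parity hypothesis is decisive: with $s$ even the exponent $s+1$ is odd, so $(\dirac_{-1}-\dirac)^{s+1}=-(\dirac_1-\dirac)^{s+1}+(s+1)(\dirac_1-\dirac)^{s+2}\varTheta$, and substituting this merges the two order-$(s+1)$ pieces into a single multiple of $(\dirac_1-\dirac)^{s+1}$, the leading term of the claim. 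The correction $(s+1)(\dirac_1-\dirac)^{s+2}\varTheta$ carries the factor $|\nu_{s+1}^{-}(F)-\nu_{s+1}^{-}(G)|\leqslant\beta_{s+1}^{-}(F,G)$, which accounts for the $\beta_{s+1}^{-}$ entry of the remainder bracket, while the two order-$(s+2)$ tails supply the $\beta_{s+2}^{+}+\beta_{s+2}^{-}$ entries; all are absorbed into $(\dirac_1-\dirac)^{s+2}\varTheta C(s)$.

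The step I expect to be the main obstacle is the exact determination of the leading coefficient. After the substitution, the coefficient of $(\dirac_1-\dirac)^{s+1}$ is a definite combination of $\nu_{s+1}^{\pm}(F)$ and $\nu_{s+1}^{\pm}(G)$, and one must check that the sign bookkeeping forced by $s$ being even yields precisely $\frac{\beta_{s+1}^{+}(F,G)-\beta_{s+1}^{-}(F,G)}{(s+1)!}$ rather than a neighbouring combination of the order-$(s+1)$ moments. Equally, one must certify that every discarded piece is genuinely of order $s+2$ and is dominated by the advertised $\beta$'s, passing from signed differences to those bounds via $|\nu(F)-\nu(G)|\leqslant\nu(F)+\nu(G)=\beta$, and confirm that the harmless constant $C(s)$ arising from $\|\sum_{k=1}^{s+1}\dirac_{-k}\|=s+1$ is all the parity conversion costs. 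The remaining cancellations are routine.
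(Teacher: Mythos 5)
Your route is exactly the paper's own: the published proof consists of the parity identity $(\dirac_1-\dirac)^{s+1}+(\dirac_{-1}-\dirac)^{s+1}=(\dirac_1-\dirac)^{s+2}\varTheta(s+1)$ (your $(\dirac_{-1}-\dirac)^{s+1}=-(\dirac_1-\dirac)^{s+1}+(s+1)(\dirac_1-\dirac)^{s+2}\varTheta$ is the same statement, proved the same way via $\dirac_{-1}-\dirac=-\dirac_{-1}(\dirac_1-\dirac)$ and telescoping) followed by the remark that the lemma then follows from (\ref{nuF}). Your derivation of the first display and your bookkeeping of the $(\dirac_1-\dirac)^{s+2}$ remainders in the second are correct and complete, and in fact fill in details the paper omits.

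However, the step you single out as the "main obstacle" and then dismiss as routine is precisely where the argument cannot be closed. Finishing your own computation, the coefficient of $(\dirac_1-\dirac)^{s+1}$ after the parity substitution is $\bigl([\nu_{s+1}^{+}(F)-\nu_{s+1}^{+}(G)]-[\nu_{s+1}^{-}(F)-\nu_{s+1}^{-}(G)]\bigr)/(s+1)!$, a combination of \emph{differences} of the order-$(s+1)$ moments, whereas the paper defines $\beta_{s+1}^{\pm}(F,G)=\nu_{s+1}^{\pm}(F)+\nu_{s+1}^{\pm}(G)$ as \emph{sums}; the two expressions coincide only when $\nu_{s+1}^{+}(G)=\nu_{s+1}^{-}(G)$. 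The mismatch is a nonzero multiple of $(\dirac_1-\dirac)^{s+1}$ and can never be absorbed into the $(\dirac_1-\dirac)^{s+2}\varTheta C(s)$ remainder: divide the Fourier transforms by $(\ee^{\ii t}-1)^{s+1}$ and let $t\to0$, which kills every $(\dirac_1-\dirac)^{s+2}$ contribution but not the constant. A concrete witness is $F=G=\dirac_{s+1}$: all hypotheses hold, $F-G=0$, and the remainder bracket $\beta_{s+2}^{+}+\beta_{s+2}^{-}+\beta_{s+1}^{-}$ vanishes, yet the stated right-hand side equals $2(\dirac_1-\dirac)^{s+1}\neq0$. So the second display, read literally with the paper's definition of $\beta^{\pm}$, is false, and no completion of your proposal can reach it; what your argument (and the paper's own one-line proof) actually establishes is that display with $\beta_{s+1}^{+}-\beta_{s+1}^{-}$ replaced by the difference combination above. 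The genuine gap in your write-up is the unverified assertion that the sign bookkeeping "yields precisely" the stated coefficient: had you carried out the check you postponed, you would have found it does not, and that the statement's leading coefficient is inconsistent with the paper's definition of $\beta$.
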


\begin{proof} Observe that
\begin{align*}
(\dirac_1-\dirac)^{s+1}+(\dirac_{-1}-
\dirac)^{s+1}&= (\dirac_1-\dirac)^{s+1}-(
\dirac_{-1})^{s+1}(\dirac_1-\dirac
)^{s+1}
\\
&=(\dirac_1-\dirac)^{s+1}\dirac_{-1}(
\dirac_1-\dirac)\sum_{j=1}^{s+1}(
\dirac_{-1})^{s+1-j}
\\
&= (\dirac_1-\dirac)^{s+2}\varTheta(s+1).
\end{align*}
The lemma now follows from (\ref{nuF}).
\end{proof}

%
%
%


\begin{lemma} Let $F\in\F_Z$ with mean $\mu(F)$ and variance $\sigma
^2(F)$, both finite.
Then, for all $\vert t\vert\leqslant\pi$,
\begin{align}
\bigl\vert\w F(t)\bigr\vert&\leqslant1-\frac{(1-\| (\dirac_1-\dirac)F\|
/2)t^2}{4\pi
}
\nonumber
\\
&\leqslant\exp \biggl\{-\frac{(1-\| (\dirac_1-\dirac)F\|/2)}{\pi
}\sin^2\frac {t}{2}
\biggr\},\label{mineka2}
\\
\bigl\vert \bigl(\w F(t)\ee^{-\ii t\mu(F)} \bigr)'\bigr\vert&\leqslant
\pi^2\sigma^2(F)\bigl\vert\sin(t/2)\bigr\vert.\label{roos}
\end{align}
\end{lemma}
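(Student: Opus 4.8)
The plan is to prove the two estimates \eqref{mineka2} and \eqref{roos} by separate elementary arguments. Throughout, let $X$ be a random variable with distribution $F$, write $\mu=\mu(F)$, $\sigma^2=\sigma^2(F)$, and set $u=1-\frac12\|(\dirac_1-\dirac)F\|$. By the identity recorded just after the definition of $u_{ij}$ (cf. (1.9) and (5.15) in \cite{Ce16}), $u=\sum_{k}\min(F\{k\},F\{k-1\})\ge0$, and this representation of $u$ as a total overlap mass is the engine behind \eqref{mineka2}.

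To prove \eqref{mineka2}, I introduce the nonnegative lattice measure
\[
R=\sum_{k}\frac{m_k}{2}(\dirac_{k-1}+\dirac_k),\qquad m_k=\min\bigl(F\{k\},F\{k-1\}\bigr).
\]
Since $R\{j\}=\frac12(m_{j+1}+m_j)$ and both $m_{j+1}$ and $m_j$ are at most $F\{j\}$, one has $0\le R\le F$ atomwise, so $F-R$ is a nonnegative measure with $\|F-R\|=1-u$ (as $\|R\|=\sum_k m_k=u$). A direct computation gives $\w R(t)=\frac{1+\ee^{-\ii t}}{2}\sum_k m_k\ee^{\ii tk}$, and since $|\frac{1+\ee^{-\ii t}}{2}|=|\cos(t/2)|$ and $\sum_k m_k=u$, we get $|\w R(t)|\le u|\cos(t/2)|$. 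Therefore $|\w F(t)|\le|\w R(t)|+\|F-R\|\le u|\cos(t/2)|+(1-u)$, and for $|t|\le\pi$, where $\cos(t/2)\ge0$, this reads $|\w F(t)|\le1-u(1-\cos(t/2))$. The first bound in \eqref{mineka2} then follows from the elementary inequality $1-\cos(t/2)\ge t^2/(4\pi)$ on $[-\pi,\pi]$: the even function $g(t)=1-\cos(t/2)-t^2/(4\pi)$ satisfies $g(0)=g'(0)=g'(\pi)=0$ and $g''$ changes sign exactly once on $[0,\pi]$, which forces $g'\ge0$ and hence $g\ge0$ there. The exponential bound follows by chaining $1-\frac{ut^2}{4\pi}\le1-\frac u\pi\sin^2(t/2)\le\exp\{-\frac u\pi\sin^2(t/2)\}$, using $\sin^2(t/2)\le(t/2)^2$ and $1-x\le\ee^{-x}$.

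For \eqref{roos}, write $\phi(t)=\w F(t)\ee^{-\ii t\mu}=\Expect\,\ee^{\ii t(X-\mu)}$, so that $\phi'(t)=\ii\Expect[(X-\mu)\ee^{\ii t(X-\mu)}]$. Because $\Expect(X-\mu)=0$, I may subtract a constant and write $\phi'(t)=\ii\Expect[(X-\mu)(\ee^{\ii t(X-\mu)}-1)]$. Bounding $|\ee^{\ii t(X-\mu)}-1|=2|\sin(t(X-\mu)/2)|\le|t|\,|X-\mu|$ gives $|\phi'(t)|\le|t|\,\Expect(X-\mu)^2=|t|\sigma^2$. Finally, the Jordan-type inequality $|t|\le\pi|\sin(t/2)|$ on $[-\pi,\pi]$ yields $|\phi'(t)|\le\pi\sigma^2|\sin(t/2)|\le\pi^2\sigma^2|\sin(t/2)|$, which is \eqref{roos}.

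The only genuine idea is the construction of the symmetric overlap measure $R$: the splitting $F=R+(F-R)$ isolates a piece whose transform carries the smoothing factor $\cos(t/2)$, and it is exactly this factor that produces the quadratic decay of $|\w F(t)|$ near the origin; everything else reduces to the routine trigonometric inequalities above. I expect the main obstacle to be pinning down the sharp constant $1/(4\pi)$, i.e. verifying $1-\cos(t/2)\ge t^2/(4\pi)$ rather than a weaker constant. By contrast, the factor $\pi^2$ in \eqref{roos} is not tight, the argument above delivering the smaller constant $\pi$.
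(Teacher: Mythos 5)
Your proof is correct, but note that the paper itself offers no proof to compare against: the lemma is stated as a collection of known facts, with the first bound in (\ref{mineka2}) cited from \cite{BKN14} (p.~884), the second bound dismissed as trivial, and (\ref{roos}) cited from \cite{Ce16} (p.~81). What you have produced is therefore a self-contained substitute for those citations, and it holds up. Your overlap decomposition $F=R+(F-R)$, with $R$ built from the masses $m_k=\min(F\{k\},F\{k-1\})$ so that $\w R(t)$ carries the factor $\vert\cos(t/2)\vert$ and $\Vert F-R\Vert=1-u$, is precisely the Mineka-type smoothing idea underlying the cited result, and all the bookkeeping ($R\{j\}\leqslant F\{j\}$, $\Vert R\Vert=u$) checks out. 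Two details deserve emphasis. First, your second-derivative argument for $1-\cos(t/2)\geqslant t^2/(4\pi)$ on $[-\pi,\pi]$ is not a removable pedantry: Jordan's inequality alone gives only $1-\cos(t/2)=2\sin^2(t/4)\geqslant t^2/(2\pi^2)$, and $1/(2\pi^2)<1/(4\pi)$, so the sharper monotonicity argument (via $g(0)=g'(0)=g'(\pi)=0$ and the single sign change of $g''$) is genuinely needed to reach the stated constant. Second, your proof of (\ref{roos}) --- centering by $\Expect(X-\mu)=0$, the bound $\vert\ee^{\ii\theta}-1\vert\leqslant\vert\theta\vert$, and then $\vert t\vert\leqslant\pi\vert\sin(t/2)\vert$ on $[-\pi,\pi]$ --- is valid (differentiation under the expectation being justified by the finite second moment) and in fact yields the constant $\pi$ rather than $\pi^2$, i.e. a slightly stronger statement than the lemma claims; the weaker constant in the paper simply reflects the form in which the inequality appears in \cite{Ce16}.
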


The first estimate in (\ref{mineka2}) is given in \cite{BKN14} p. 884,
the second estimate in (\ref{mineka2}) is trivial. For the proof of
(\ref{roos}), see p.~81 in \cite{Ce16}.


\begin{lemma}\label{varijotas} Let $M\in\M$ be concentrated on $\ZZ$,
$\sum_{k\in\ZZ}\vert k\vert\vert M\{k\}\vert<\infty$. Then, for any
$a\in\RR$, $b>0$ the following inequality holds
\begin{equation*}
\| M\|\leqslant(1+b\pi)^{1/2} \Biggl( \frac{1}{2\pi}\int
_{-\pi}^\pi \biggl( \bigl\vert\w M(t) \bigr\vert^2+
\frac{1}{b^2} \bigl\vert \bigl(\ee^{-\ii ta}\w M(t) \bigr)'
\bigr\vert^2 \biggr)\,\dd t \Biggr)^{1/2}. \label{var}
\end{equation*}
\end{lemma}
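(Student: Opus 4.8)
The plan is to exploit the fact that $M$ lives on $\ZZ$, so that $\w M(t)=\sum_{k\in\ZZ}\ee^{\ii tk}M\{k\}$ is an ordinary Fourier series whose coefficients are the atoms $M\{k\}$, and $\|M\|=\sum_{k\in\ZZ}|M\{k\}|$. The whole inequality is then a weighted Cauchy--Schwarz estimate translated through Parseval's identity.

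First I would record the two Parseval identities. Since $\{\ee^{\ii tk}\}_{k\in\ZZ}$ is orthonormal for the inner product $\frac{1}{2\pi}\int_{-\pi}^\pi$, one has $\frac{1}{2\pi}\int_{-\pi}^\pi|\w M(t)|^2\,\dd t=\sum_{k}|M\{k\}|^2$. Writing $\ee^{-\ii ta}\w M(t)=\sum_k M\{k\}\ee^{\ii t(k-a)}$ and differentiating term by term (legitimate because $\sum_k|k|\,|M\{k\}|<\infty$ makes $\w M$ continuously differentiable with $(\ee^{-\ii ta}\w M(t))'=\sum_k \ii(k-a)M\{k\}\ee^{\ii t(k-a)}$), the same orthonormality gives $\frac{1}{2\pi}\int_{-\pi}^\pi|(\ee^{-\ii ta}\w M(t))'|^2\,\dd t=\sum_k(k-a)^2|M\{k\}|^2$. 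Hence the bracketed integral on the right equals $\sum_k|M\{k\}|^2\bigl(1+(k-a)^2/b^2\bigr)$.

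Next I apply Cauchy--Schwarz to $\|M\|=\sum_k|M\{k\}|$, splitting each summand as $|M\{k\}|\bigl(1+(k-a)^2/b^2\bigr)^{1/2}$ times $\bigl(1+(k-a)^2/b^2\bigr)^{-1/2}$. This yields
\[
\|M\|^2\leqslant\Bigl(\sum_k|M\{k\}|^2\bigl(1+(k-a)^2/b^2\bigr)\Bigr)\Bigl(\sum_k\bigl(1+(k-a)^2/b^2\bigr)^{-1}\Bigr),
\]
where the first factor is exactly the bracketed integral expression computed above. The result then follows once I show $\sum_k\bigl(1+(k-a)^2/b^2\bigr)^{-1}\leqslant1+b\pi$ and take square roots.

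The main obstacle is this last elementary-looking sum, because the naive integral comparison (peak term $\leqslant1$ plus $\int_\RR b^2/(b^2+(x-a)^2)\,\dd x=\pi b$) loses a term and only gives $2+b\pi$. To obtain the sharp constant $1+b\pi$ I would use the closed form $\sum_{k\in\ZZ}\frac{b^2}{b^2+(k-a)^2}=\pi b\,\frac{\sinh(2\pi b)}{\cosh(2\pi b)-\cos(2\pi a)}$, bound the fraction by its value at $\cos(2\pi a)=1$, namely $\coth(\pi b)$, and finish with the elementary inequality $\pi b\coth(\pi b)=\pi b+\frac{2\pi b}{\ee^{2\pi b}-1}\leqslant\pi b+1$, valid because $x\leqslant \ee^{x}-1$ for $x\geqslant0$. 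If the right-hand integral is infinite the asserted inequality is trivial, so there is no loss in assuming $\sum_k(k-a)^2|M\{k\}|^2<\infty$, which also legitimizes the termwise differentiation used above.
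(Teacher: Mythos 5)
Your proof is correct. Note that the paper itself gives no proof of this lemma: it is stated as a well-known inversion inequality, with the proof deferred to Lemma 5.1 of \cite{Ce16}. Your argument --- Parseval's identity for both integrals (legitimized, as you say, by $\sum_k|k|\,|M\{k\}|<\infty$), then Cauchy--Schwarz with the weights $(1+(k-a)^2/b^2)^{\pm1/2}$, then a bound on the Lorentzian lattice sum --- is exactly the standard route by which such inequalities are proved, so in substance it coincides with the cited source. The only place you genuinely diverge is the final sum estimate: you invoke the exact closed form $\sum_{k\in\ZZ}b^2/(b^2+(k-a)^2)=\pi b\,\sinh(2\pi b)/(\cosh(2\pi b)-\cos(2\pi a))$ and the bound $\pi b\coth(\pi b)\leqslant 1+\pi b$, which is correct but heavier than needed. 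The "naive" comparison only loses a term if done crudely; a careful unimodal comparison recovers $1+\pi b$: with $f(x)=b^2/(b^2+(x-a)^2)$, $k_1=\lfloor a\rfloor$, $k_0=\lceil a\rceil$ and $\alpha=a-k_1$, one has $\sum_{k\neq k_0,k_1}f(k)\leqslant\pi b-\int_{k_1}^{k_0}f$, while $\int_{k_1}^{k_0}f\geqslant\alpha f(k_1)+(1-\alpha)f(k_0)$, so $\sum_k f(k)\leqslant\pi b+(1-\alpha)f(k_1)+\alpha f(k_0)\leqslant\pi b+1$. Either way, your argument is complete and establishes the lemma with the stated constant.
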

Lemma \ref{varijotas} is a well-known inversion inequality for lattice
distributions. Its proof can be found, for example, in \cite{Ce16},
Lemma 5.1. 


\begin{lemma}\label{MBlem} Let $H_{in}=\eL(w_iS_i)$ and let $D_{in}$ be
defined by (\ref{Din}), $i=1,\dots,N$. Let conditions (\ref{cond1})
hold. Then, for $i=1,2,\dots,N$,
\begin{align*}
H_{in}-D_{in}&=\qubar_i(p_i+
\qubar_i)Y_i\exp\{n_i\gamma_iY_i/60
\}\varTheta C+(p_i+ \qubar_i) (\dirac_{w_i}-
\dirac)\varTheta C \ee^{-C_in_i},
\\
H_{in}&=\exp\{n_i\gamma_iY_i/30
\} \varTheta C+(p_i+\qubar_i) (\dirac_{w_i}-
\dirac)\varTheta C\ee^{-C_in_i},
\\
D_{in}&=\exp\{n_i\gamma_iY_i/30
\} \varTheta C,\qquad\ee^{-C_in_i}\leqslant\frac{C(k_0)\qubar
_i}{\sqrt{\max(n_i\qubar_i,1)}},
\\
\bigl\vert\w Y_i(t)\bigr\vert&\leqslant4\bigl\vert\sin(tw_i/2)\bigr\vert,
\qquad Re \w Y_i(t)\geqslant-\frac{4}{3}\sin^2(tw_i/2),
\qquad\frac{\qubar
_i}{2}\leqslant\gamma_i\leqslant
\qubar_i. 
\end{align*}
\end{lemma}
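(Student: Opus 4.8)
The plan is to treat the three measure expansions and the two pointwise bounds separately, disposing of the bounds on $\w Y_i$ and $\gamma_i$ by direct computation and obtaining the expansions by reducing to the unweighted Markov binomial law and transferring the estimates of \cite{CeVe10}. First I would handle the elementary bounds. Using $p_i+q_i=1$, the transform simplifies to $\w Y_i(t)=(\ee^{\ii w_it}-1)/(1-p_i\ee^{\ii w_it})$. Since $|\ee^{\ii w_it}-1|=2|\sin(w_it/2)|$ and $|1-p_i\ee^{\ii w_it}|\geqslant 1-p_i=q_i\geqslant 1/2$, this yields $|\w Y_i(t)|\leqslant 4|\sin(w_it/2)|$ at once. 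Multiplying $\w Y_i$ by the conjugate of its denominator, the real part of the numerator equals $-2(1+p_i)\sin^2(w_it/2)\leqslant 0$, so $Re\,\w Y_i(t)\leqslant 0$ throughout; bounding the positive denominator from below using $p_i\leqslant 1/2$ then gives the stated lower bound on $Re\,\w Y_i$. For $\gamma_i$ I would write $\gamma_i=\qubar_i\cdot q_i/(q_i+\qubar_i)$: the fraction is ${<}\,1$ and is $\geqslant 1/2$ because $q_i\geqslant 1/2\geqslant\qubar_i$, which gives $\qubar_i/2\leqslant\gamma_i\leqslant\qubar_i$.

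For the three expansions I would exploit that $H_{in}=\eL(w_iS_i)$ is the image of $\eL(S_i)$ under the bijection $x\mapsto w_ix$, under which $\dirac_1$ and the $w_i=1$ versions of $Y_i$ and $D_{in}$ push forward to $\dirac_{w_i}$, $Y_i$ and $D_{in}$, while the total variation norm is preserved and transforms change only by $t\mapsto w_it$. Hence each identity is the pushforward of its $w_i=1$ analogue, and it suffices to prove the expansions for $w_i=1$. These are precisely the representations of \cite{CeVe10}: the transform of the unweighted law is $v^{\top}A(z)^{n_i}\mathbf 1$ with $z=\ee^{\ii t}$ and $A(z)$ the transfer matrix having top row $(\pbar_i,\qubar_iz)$ and bottom row $(q_i,p_iz)$; extracting its dominant eigenvalue $\lambda(z)$ writes $\log\w{\eL(S_i)}(t)=n_i\log\lambda(\ee^{\ii t})+(\text{transient})$, and by construction the mean-and-variance-matching measure $D_{in}$ in (\ref{Din}) reproduces the quadratic-in-$Y_i$ part of $n_i\log\lambda$. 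The difference is then controlled by factoring out the smoothing exponential $\exp\{cn_i\gamma_iY_i\}$, whose transform has modulus $\leqslant 1$ since $Re\,\w Y_i\leqslant 0$; the leftover cubic-and-higher error carries the prefactor $\qubar_i(p_i+\qubar_i)$ of the first identity, and the analogous bookkeeping for a single exponential produces the representations of $H_{in}$ and $D_{in}$.

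Finally, the transient arises from the non-stationary start at state $0$: the subdominant eigenvalue of the transition matrix is $1-q_i-\qubar_i$, of modulus $\leqslant 1/2$ under (\ref{cond1}), so the transient is genuinely of the form $(p_i+\qubar_i)(\dirac_{w_i}-\dirac)\varTheta C\ee^{-C_in_i}$ with $C_i$ bounded below. The closing inequality $\ee^{-C_in_i}\leqslant C(k_0)\qubar_i/\sqrt{\max(n_i\qubar_i,1)}$ is where the first hypothesis of (\ref{cond1}), $\qubar_i\geqslant n^{-k_0}$, enters: it keeps the polynomial right-hand side from decaying faster than a fixed power of $n$, so the exponential left-hand side dominates, which one verifies in the two regimes $n_i\qubar_i\geqslant 1$ and $n_i\qubar_i<1$. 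I expect the real obstacle to be the transfer-operator expansion with the correct remainder structure, namely producing the smoothing factor $\exp\{cn_i\gamma_iY_i\}$ together with the sharp prefactor $\qubar_i(p_i+\qubar_i)$ and cleanly separating the leading symbol from the transient; this is the delicate characteristic-function analysis supplied by \cite{CeVe10}, whereas the scaling reduction and the elementary bounds are routine.
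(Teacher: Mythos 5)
Most of your proposal runs parallel to the paper's own proof, which is essentially a citation: the three measure identities are taken verbatim from Lemma 5.1, Lemma 5.4 and the relations on pp.~1131--1132 of \cite{CeVe10}, and the estimate for $\ee^{-C_in_i}$ is exactly your two-regime polynomial-versus-exponential computation based on $\qubar_i\geqslant n^{-k_0}$ (the paper in fact obtains the stronger bound with $\max(n_i\qubar_i,1)$ in the denominator instead of its square root). Your scaling reduction to $w_i=1$, the bound $\vert\w Y_i(t)\vert\leqslant 4\vert\sin(tw_i/2)\vert$, and the two-sided bound on $\gamma_i$ are all correct.

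The genuine gap is your treatment of the bound on $Re\,\w Y_i(t)$. Writing $\theta=w_it$, one has
\[
Re\,\w Y_i(t)=\frac{-2(1+p_i)\sin^2(\theta/2)}{\vert 1-p_i\ee^{\ii\theta}\vert^2},
\qquad
(1-p_i)^2\leqslant\vert 1-p_i\ee^{\ii\theta}\vert^2\leqslant(1+p_i)^2 .
\]
Bounding the denominator from below, as you propose, gives $Re\,\w Y_i(t)\geqslant-\frac{2(1+p_i)}{(1-p_i)^2}\sin^2(\theta/2)$, and for $p_i\leqslant 1/2$ the constant $\frac{2(1+p_i)}{(1-p_i)^2}$ can be as large as $12$ (sharp at $p_i=1/2$ as $t\to 0$); it can never be brought down to $4/3$, since the numerator coefficient alone is $2(1+p_i)\geqslant 2$. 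The constant $4/3$ comes from the \emph{opposite} estimate, $\vert 1-p_i\ee^{\ii\theta}\vert^2\leqslant(1+p_i)^2$, which yields $Re\,\w Y_i(t)\leqslant-\frac{2}{1+p_i}\sin^2(\theta/2)\leqslant-\frac{4}{3}\sin^2(\theta/2)$ for $p_i\leqslant 1/2$. This also shows that the inequality sign printed in the lemma is a typo: the ``$\geqslant$'' version with constant $4/3$ is false (at $p_i=1/2$ and small $t$ one has $Re\,\w Y_i(t)\sim-12\sin^2(\theta/2)$), and it is the ``$\leqslant$'' version that the paper actually needs in the proof of Theorem \ref{TMARK}, where $\vert\exp\{n_i\gamma_i\w Y_i(t)/60+\cdots\}\vert$ is bounded by $\exp\{-n_i\gamma_i\sin^2(tw_i/2)/45-\cdots\}$. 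Your conclusion ($Re\,\w Y_i\leqslant 0$ together with a lower bound carrying a constant of order ten) is too weak to support that smoothing step, so this part of the argument must be redone with the denominator bounded from above.
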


\begin{proof} The statements follow from Lemma 5.4, Lemma 5.1 and the
relations given on pp.~1131--1132 in \cite{CeVe10}. The estimate for
$\ee^{-C_in_i}$ follows from the first assumption in (\ref{cond1}) and
the following simple estimate
\begin{align*}
\ee^{-C_in_i}&\leqslant\ee^{-C_in_i/2}\ee^{-C_in_i\qubar
_i/2}\leqslant
\frac{C(k_0)}{n_i^{k_0}}\frac{2}{1+C_in_i\qubar_1}
\\
&\leqslant\frac{C(k_0)\qubar_i}{\min(1,C_i)(1+n_i\qubar
_i)}\leqslant\frac{C(k_0)\qubar_i}{\min(1,C_i)\max(n_i\qubar
_i,1)}.\qedhere
\end{align*}
\end{proof}




\section{Proofs for sums of independent rvs}

\begin{proof}[Proof of Theorem \ref{Teorema}] Let $F_{ij,w}$ (resp.
$G_{ij,w}$) denote the distribution of $w_iX_{ij}$ (resp. $w_iZ_{ij}$).
Note that
$\w F_{ij,w}(t)=\w F_{ij}(w_it)$. By the triangle inequality
\begin{align*}
\bigl\vert\eL(S)-\eL(Z)\bigr\vert_K&= \Biggl\vert\prod
_{i=1}^N \eL (w_iS_i)-
\prod_{i=1}^N \eL(w_iZ_i)
\Biggr\vert_K
\\
&\leqslant\sum_{i=1}^N \Biggl\vert \bigl(
\eL(w_iS_i)-\eL (w_iZ_i)
\bigr) \prod_{l=1}^{i-1}\eL(w_lS_l)
\prod_{l=i+1}^N\eL (w_lZ_l)
\Biggr\vert_K.
\end{align*}
Similarly,
\begin{align*}
\eL(w_iS_i)-\eL(w_iZ_i)&=
\prod_{j=1}^{n_i}F_{ij,w}-\prod
_{j=1}^{n_i}G_{ij,w}
\\
&= \sum_{j=1}^{n_i}(F_{ij,w}-G_{ij,w})
\prod_{k=1}^{j-1}F_{ik,w}\prod
_{k=j+1}^{n_i}G_{ik,w}.
\end{align*}
For the sake of brevity, let
\begin{align*}
E_{ij}&:=\prod_{k=1}^{j-1}F_{ik,w}
\prod_{k=j+1}^{n_i}G_{ik,w},
\\
T_i&:=\prod_{l=1}^{i-1}
\eL(w_lS_l)\prod_{l=i+1}^N
\eL(w_lZ_l)= \prod_{l=1}^{i-1}
\prod_{m=1}^{n_l}F_{lm,w} \prod
_{l=i+1}^N\prod_{m=1}^{n_l}G_{lm,w}.
\end{align*}
Then, combining both equations given above with Lemma \ref{f-g} , we get
\begin{align}
\bigl\vert\eL(S)-\eL(Z)\bigr\vert_K&\leqslant C(s) \sum
_{i=1}^N\sum_{j=1}^{n_i}
\bigl[\beta_{s+1}^+(F_{ij},G_{ij})
\nonumber
\\
&\quad+ \beta_{s+1}^{-} (F_{ij},G_{ij})
\bigr]\bigl\vert(\dirac_{w_i}-\dirac)^{s+1}E_{ij}T_i
\bigr\vert_K.\label{bndr1}
\end{align}

Let $\vert t\vert\leqslant\pi/\max_iw_i$. Then it follows from
(\ref
{mineka2}) that
\begin{equation}
\bigl\vert\w E_{ij}(t)\w T_i(t)\bigr\vert\leqslant
\ee^{u_{ij}\sin
^2(tw_i/2)/\pi
} \exp \Biggl\{-\frac{1}{\pi}\sum
_{l=1}^N \sum_{m=1}^{n_l}u_{lm}
\sin ^2\frac {tw_l}{2} \Biggr\}. \label{expo}
\end{equation}
Observe that $\ee^{u_{ij}\sin^2(tw_i/2)/\pi}\leqslant\ee^{1/\pi}=C$.
Next, let
\begin{equation}
L:=\frac{1}{8\pi}\sum_{l=1}^N\sum
_{m=1}^{n_l}u_{lm} \bigl[(
\dirac_{w_l}-\dirac)+(\dirac_{-w_l}-\dirac) \bigr].
\label{virsus}
\end{equation}
It is not difficult to check, that $\exp\{L\}$ is a CP distribution
with non-negative characteristic function. Also, by the definition of
exponential measure, $\exp\{-L\}$, which can be called \emph{the
inverse} to $\exp\{L\}$, is a signed measure with finite variation.
We have
\begin{equation}
\vert(\dirac_{w_i}-\dirac)^{s+1}E_{ij}T_i
\vert_K= \vert(\dirac_{w_i}-\dirac)^{s+1}E_{ij}T_i
\exp\{-L\}\exp\{L\}\vert _K. \label{jo}\vadjust{\goodbreak}
\end{equation}
Next step is similar to the definition of \emph{mod}-Poisson
convergence. We apply Lemma \ref{ad} with $h=\max w_i/\pi$ and
$U_1=\exp\{L\}$ and $W_1=(\dirac_{w_i}-\dirac
)^{s+1}E_{ij}T_i\exp\{-L\}$. By Lemma \ref{ac},
\begin{align}
Q\bigl(\exp\{L\},h\bigr)&\leqslant C\frac{\max w_i}{\min w_i}\cdot Q\bigl(\exp\{L\} ,
\min w_i/2\bigr)
\nonumber
\\
&\leqslant C\frac{\max w_i}{\min w_i} \Biggl( \sum_{l=1}^N
\sum_{m=1}^{n_l}u_{lm}
\Biggr)^{-1/2}. \label{quu}
\end{align}
From (\ref{expo}) and (\ref{virsus}), it follows that
\begin{align}
\biggl\vert\frac{\w W_1(t)}{t} \biggr\vert\cdot\frac{1}{h}&\leqslant C(s)
\frac{\vert
\sin(tw_i/2)\vert^{s+1}}{h\vert t\vert} \exp \Biggl\{-\frac{1}{2\pi}\sum
_{l=1}^N \sum_{m=1}^{n_l}u_{lm}
\sin^2\frac {tw_l}{2} \Biggr\}
\nonumber
\\
&\leqslant C(s)\frac{w_i}{h}\bigl\vert\sin(tw_i/2)
\bigr\vert^s \exp \Biggl\{-\frac{1}{2\pi }\sum
_{m=1}^{n_i}u_{im} \sin ^2(tw_i/2)
\Biggr\}
\nonumber
\\
&\leqslant C(s) \Biggl(\sum_{m=1}^{n_i}u_{im}
\Biggr)^{-s/2}. \label{jo2}
\end{align}
It remains to estimate $\| W_1\|$.
Let
\begin{align*}
\varPhi_{lm,w}&:=F_{lm,w}\exp \biggl\{\frac{1}{8\pi }u_{lm}
\bigl[(\dirac_{w_l}-\dirac)+(\dirac_{-w_l}-\dirac) \bigr]
\biggr\},
\\
\varPsi_{lm,w}&:=G_{lm,w}\exp \biggl\{\frac{1}{8\pi}u_{lm}
\bigl[(\dirac_{w_l}-\dirac)+(\dirac_{-w_l}-\dirac) \bigr]
\biggr\}
\end{align*}
Then by the properties of the total variation norm,
\begin{align}
\| W_1\|&\leqslant \biggl\|\exp \biggl\{\frac{1}{8}u_{ij}
\bigl[(\dirac_{w_i}-\dirac)+(\dirac_{-w_i}-\dirac) \bigr]
\biggr\} \biggr\|
\nonumber
\\
&\quad\times \Biggl\|(\dirac_{w_i}-\dirac)^{s+1}\prod
_{k=1}^{j-1}\varPhi_{ik,w}\prod
_{k=j+1}^{n_i}\varPsi_{ik,w} \Biggr\|
\nonumber
\\
&\quad\times\prod_{l=1}^{i-1} \Biggl\|\prod
_{m=1}^{n_l}\varPhi _{lm,w} \Biggr\|\prod
_{l=i+1}^N \Biggl\|\prod
_{m=1}^{n_l}\varPsi_{lm,w} \Biggr\|. \label{W1a}
\end{align}

The first norm in (\ref{W1a}) is bounded by $\exp \{\frac
{1}{8}u_{ij}[\| \dirac_{w_i}-\dirac\|+\| \dirac_{-w_i}-\dirac\|
] \}\leqslant
\exp\{1/2\}$. The total variation norm is invariant with respect to
scale. Therefore, without loss of generality, we can switch to $w_l=1$.
In this case, we use the notations $\varPhi_{ik},\varPsi_{ik}$. Then, again
employing the inverse CP measures, we get
\begin{align*}
& \Biggl\|(\dirac_{w_i}-\dirac)^{s+1}\prod
_{k=1}^{j-1}\varPhi _{ik,w}\prod
_{k=j+1}^{n_i}\varPsi_{ik,w} \Biggr\|
\\
&\quad= \Biggl\|(\dirac_{1}-\dirac)^{s+1}\prod
_{k=1}^{j-1}\varPhi _{ik}\prod
_{k=j+1}^{n_i}\varPsi_{ik} \Biggr\|
\\
&\quad= \Biggl\|(\dirac_{1}-\dirac)^{s+1}\prod
_{k=1}^{j-1}\varPhi _{ik}\prod
_{k=j+1}^{n_i}\varPsi_{ik} \exp\bigl
\{u_{ij}(\dirac_1-\dirac )\bigr\}\exp\bigl
\{u_{ij}(\dirac-\dirac_1)\bigr\} \Biggr\|
\\
&\quad\leqslant\ee^2 \Biggl\|(\dirac_{1}-\dirac)^{s+1}
\exp\bigl\{ u_{ij}(\dirac_1-\dirac)\bigr\}\prod
_{k=1}^{j-1}\varPhi_{ik}\prod
_{k=j+1}^{n_i}\varPsi_{ik} \Biggr\|.
\end{align*}
We apply Lemma \ref{varijotas} with $a=u_{ij}+\sum_{k\ne
i}^{n_i}\mu_{ik}$, $b=1$, where $\mu_{ik}=\nu_1^{+}(
F_{ik})+\nu_1^{-}(F_{ik})$ is the mean of $F_{ik}$ and, due to
assumption (\ref{sal2}), of $G_{ik}$. Let
\[
\w\Delta(t):= \bigl(\ee^{\ii t}-1 \bigr)^{s+1} \exp\bigl
\{u_{ij} \bigl(\ee^{\ii t}-1-it \bigr)\bigr\}\prod
_{k=1}^{j-1}\w\varPhi_{ik}(t)
\ee^{-\ii t\mu_{ik}}\prod_{k=j+1}^{n_i}\w
\varPsi_{ik}\ee^{-\ii t\mu_{ik}}.
\]
It follows from (\ref{mineka2}) that
\begin{align*}
\bigl\vert\Delta(t)\bigr\vert&\leqslant C(s)\bigl\vert\sin(t/2)\bigr\vert ^{s+1}\exp \Biggl
\{- \frac {1}{2\pi}\sum_{m=1}^{n_i}u_{im}
\sin ^2(t/2) \Biggr\}
\\
&\leqslant C(s) \Biggl(\sum_{m=1}^{n_i}u_{im}
\Biggr)^{-s/2}.
\label{fin1}
\end{align*}
For the estimation of $\vert\Delta'(t)\vert$, observe that by (\ref
{mineka2}) and (\ref{roos})
\begin{align*}
\bigl\vert \bigl(\w\varPhi_{ik}(t)\ee^{-\ii t\mu_{ik}} \bigr)'
\bigr\vert& \leqslant \biggl\vert\w F_{ik}(t)\ee^{-\ii t\mu_{ik}}\frac
{u_{ik}}{\pi}
\sin(t/2)\ee^{(u_{ik}/2\pi)\sin^2(t/2)} \biggr\vert
\\
&\quad+ \bigl\vert \bigl(\w F_{ik}(t)\ee^{-\ii t\mu_{ik}}
\bigr)' \ee^{(u_{ik}/2\pi )\sin^2(t/2)} \bigr\vert
\\
&\leqslant C(s) \bigl(u_{ik}+\sigma^2_{ik}
\bigr)\bigl\vert\sin(t/2)\bigr\vert
\\
&\leqslant C(s) \bigl(u_{ik}+\sigma^2_{ik}
\bigr)\bigl\vert\sin(t/2)\bigr\vert\exp \biggl\{-\frac {u_{ik}}{\pi}\sin ^2(t/2)
\biggr\} \ee^{1/\pi}.
\end{align*}
The same bound holds for $\vert(\w\varPsi_{ik}(t)\exp\{-\ii t\mu
_{ik}\})'\vert$. The direct calculation shows that
\[
\bigl\vert \bigl( \bigl(\ee^{\ii t}-1 \bigr)^{s+1}\exp\bigl
\{u_{ij} \bigl(\ee ^{\ii t}-1-\ii t \bigr)\bigr\}
\bigr)'\bigr\vert\leqslant
C(s)
\bigl\vert\sin(t/2)\bigr\vert^{s}\exp \biggl\{-\frac{1}{\pi}
u_{ij} \sin ^2(t/2) \biggr\}.
\]
Taking into account of the previous two estimates, it is not difficult
to prove that
\begin{align*}
\big\vert\Delta'(t)\big\vert&\leqslant C(s)\big\vert\sin(t/2)
\big\vert^s \exp \Biggl\{-\frac{1}{\pi }\sum
_{k=1}^{n_i}u_{ik} \sin^2(t/2)
\Biggr\}
\\
&\quad\times \Biggl(1+\sin^2(t/2)\sum_{k=1,k\ne j}^{n_i}
\bigl(u_{ik}+\sigma^2_{ik} \bigr) \Biggr)
\nonumber
\\
&\leqslant C(s) \Biggl(\sum_{k=1}^{n_i}u_{ik}
\Biggr)^{-s/2} \Biggl(1+\sum_{k=1}^{n_i}
\sigma^2_{ik} / \sum_{k=1}^{n_i}u_{ik}
\Biggr). 
\end{align*}
From Lemma \ref{varijotas}, it follows that
\begin{align}
\Biggl\|(\dirac_{w_i}-\dirac)^{s+1}\prod
_{k=1}^{j-1}\varPhi _{ik,w}\prod
_{k=j+1}^{n_i}\varPsi_{ik,w} \Biggr\| \leqslant C(s)
\Biggl(\sum_{k=1}^{n_i}u_{ik}
\Biggr)^{-s/2} \Biggl(1+\sum_{k=1}^{n_i}
\sigma^2_{ik} / \sum_{k=1}^{n_i}u_{ik}
\Biggr).\label{fin3}
\end{align}
The remaining two norms in (\ref{W1a}) can be estimated similarly:
\begin{equation}
\Biggl\|\prod_{m=1}^{n_l}\varPhi_{lm,w}
\Biggr\|, \Biggl\|\prod_{m=1}^{n_l}\varPsi _{lm,w}
\Biggr\|\leqslant C \Biggl(1+\sum_{m=1}^{n_l}
\sigma^2_{lm} /\sum_{m=1}^{n_l}u_{lm}
\Biggr). \label{fin4}
\end{equation}
Substituting (\ref{fin3}), (\ref{fin4}) into (\ref{W1a}), we obtain
\begin{equation}
\| W_1\|\leqslant C(N,s) \Biggl(\sum_{m=1}^{n_i}u_{im}
\Biggr)^{-s/2} \prod_{l=1}^N
\Biggl(1+\sum_{k=1}^{n_l}\sigma^2_{lk}
/\sum_{k=1}^{n_l} u_{lk}
\Biggr).\label{fin5}
\end{equation}
Combining (\ref{fin5}) with (\ref{quu}), (\ref{jo2}) and (\ref
{jo}), we get
\begin{align*}
\bigl\vert(\dirac_{w_i}-\dirac)^{s+1}E_{ij}T_i
\bigr\vert_K &\leqslant C(N,s)\frac{\max_jw_j}{\min_j
w_j} \Biggl(\sum
_{i=1}^N \sum_{k=1}^{n_i}u_{ik}
\Biggr)^{-1/2}
\\
&\quad\times \Biggl(\sum_{m=1}^{n_i}u_{im}
\Biggr)^{-s/2}\prod_{l=1}^N
\Biggl(1+\sum_{k=1}^{n_l}\sigma^2_{lk}
/\sum_{k=1}^{n_l} u_{lk} \Biggr).
\end{align*}
Substituting the last estimate into (\ref{bndr1}) we complete the
proof of
(\ref{BTa}).
The proof of (\ref{BTb}) is very similar and, therefore, omitted.
\end{proof}


\begin{proof}[Proof of Theorem \ref{TeoremaID}] We outline only the
differences from the proof of Theorem \ref{Teorema}.
No use
of convolution with the inverse Poisson measure is required, since we
have powers of $F_i^{n_i}$, which can be used for Levy's
concentration function. Let $\lfloor a\rfloor$
denote an integer part of $a$ and let $a(k):=\lfloor(k-1)/2\rfloor$,
$b(k):=\lfloor(n_i-k)/2\rfloor$. Then, as in the proof of Theorem
\ref
{Teorema}, we obtain
\begin{align*}
\bigl\vert\eL(S)-\eL(Z)\bigr\vert_K&\leqslant C(s)\sum
_{i=1}^N\sum_{k=1}^{n_i}
\bigl(\beta_{s+1}^{+}(F_i,G_i)+
\beta_{s+1}^{-}(F_i,G_i) \bigr)
\\
&\quad\times \Biggl\vert(\dirac_{w_i}-\dirac)^{s+1}F_{iw}^{a(k)}G_{iw}^{b(k)}
F_{iw}^{a(k)}G_{iw}^{b(k)}\prod
_{j=1}^{i-1}F_{jw}^{n_j} \prod
_{j=i+1}^NG_{jw}^{n_j}
\Biggr\vert_K.
\end{align*}
Here $F_{iw}$ and $G_{iw}$ denote the distributions of $w_iX_{ij}$ and
$w_iZ_{ij}$, respectively. We can apply Lemma \ref{ad} to the
Kolmogorov norm given above, taking $W=(\dirac_{w_i}-\dirac
)^{s+1}F_{iw}^{a(k)}G_{iw}^{b(k)}$. The remaining distribution is used
in Levy's
concentration function. The Fourier--Stieltjes transform $\w W(t)/t$ is
estimated exactly as in the proof of Theorem \ref{Teorema}. The total
variation of any distribution is equal to 1, therefore $\| W\|\leqslant
\| \dirac_{w_i}-\dirac\|\leqslant2$ and we can avoid
application of Lemma \ref{varijotas}.
\end{proof}


\begin{proof}[Proof of Corollary \ref{cornb}] As proved in \cite{BaXi99}, p.~144,
\[
\frac{1}{2}\bigl\| G_k(\dirac_1-\dirac)\bigr\|\leqslant
\biggl(\frac{p_k\nu
_1^{+}(F_k)}{q_k}\ln\frac{1}{p_k} \biggr)^{-1/2}.
\]
Observe that $
\nu_1^{+}(F_j)=\nu_1^{+}(G_j)$ and $\nu_2^{+}(F_j)=\nu_2^{+}(G_j)$. It
remains to find $\nu_3^+(G_j)$ and apply Theorem \ref{TeoremaID}.
\end{proof}
\section{Proof of Theorem \ref{TMARK}}

The proof is similar to the one given in \cite{SlCe16}. Let
$A_i=\exp\{n_i\gamma_iY_i/30\}$.
From Lemma \ref{MBlem}, it follows that
\[
H_{in}=A_i\varTheta_i C +\ee^{-C_in_i}
\varTheta_i C,\qquad D_{in}=A_i
\varTheta_i C,\quad i=1,2,\dots,N.
\]
Here we have added index to $\varTheta_i$ emphasizing that they might be
different for different $i$.
As usual, we assume that the convolution $\prod_{k=N+1}^N=\prod_{k=1}^0=\dirac$.
Let also denote by $\sum_i^*$ summation over all indices
$\{j_1,j_2,\dots,j_{i-1}\in\{0,1\}\}$.
Taking into account Lemma \ref{MBlem} and the properties of the
Kolmogorov and total variation norms given in the Introduction, we get
\begin{align}
& \Biggl\vert\prod_{i=1}^NH_{in}-
\prod_{i=1}^ND_{in} \Biggr\vert
_K
\nonumber
\\
&\quad\leqslant\sum_{i=1}^N
\Biggl\vert(H_{in}-D_{in})\prod_{k=1}^{i-1}H_{kn}
\prod_{k=i+1}^ND_{kn}
\Biggr\vert_K
\nonumber
\\
&\quad\leqslant\sum_{i=1}^N
\Biggl\vert(H_{in}-D_{in})\sum\nolimits ^*_i
\prod_{k=1}^{i-1}A_k^{j_k}
\varTheta_kC
\nonumber
\\
&\qquad \times\prod_{k=i+1}^NA_k
\varTheta_k C \prod_{k=1}^{i-1}\ee
^{-(1-j_k)n_kC_k}\varTheta_kC \Biggr\vert_K
\nonumber
\\
&\quad\leqslant C(N)\sum_{i=1}^N
\qubar_i(p_i+\qubar_i)\sum
\nolimits_i^{*} \Biggl\vert Y_i\exp
\{n_i\gamma_iY_i/60\} \prod
_{k=1}^{i-1}A_k^{j_k}\prod
_{k=i+1}^NA_k
\Biggr\vert_K
\nonumber
\\
&\qquad\times\prod_{k=1}^{i-1}
\ee^{-(1-j_k)n_kC_k} +C\sum_{i=1}^N(p_i+
\qubar_i)\ee^{-C_in_i}
\nonumber
\\
&\qquad\times\sum\nolimits_i^{*} \Biggl\vert(
\dirac_{w_i}-\dirac)\prod_{k=1}^{i-1}A_k^{j_k}
\prod_{k=i+1}^NA_k
\Biggr\vert_K\prod_{k=1}^{i-1}
\ee^{-(1-j_k)n_kC_k}.\label{marko1}
\end{align}

Both summands on the right-hand side of (\ref{marko1}) are estimated
similarly. Observe that
\begin{align*}
& \Biggl\vert Y_i\exp\{n_i\gamma_iY_i/60
\} {\prod_{k=1}^{i-1}A_k^{j_k}
\prod_{k=i+1}^NA_k}_K
\Biggr\vert
\\
&\quad= \Biggl\vert Y_i \exp \Biggl\{\frac{n_i\gamma_iY_i}{60}+
\frac
{1}{30} \sum_{k=1}^{i-1}j_kn_k
\gamma_kY_k+\frac{1}{30}\sum
_{k=i+1}^Nn_k\gamma_kY_k
\Biggr\} \Biggr\vert_K.
\end{align*}
Next we apply Lemma \ref{ad} with $W=Y_i$ and $h=\max w_i/\pi$ and
$V$ with
\begin{align*}
\w V(t)&= \exp \Biggl\{-\frac{1}{90} \Biggl[\sum
_{k=1}^{i-1}j_k\max(n_k
\qubar_k,1)\sin^2(tw_k/2)
\\
&\quad+\sum_{k=i}^N\max(n_k
\qubar_k,1)\sin^2(tw_k/2) \Biggr] \Biggr\}.
\end{align*}
By Lemma \ref{MBlem}
\[
\frac{\vert\w Y_i(t)\vert}{t}\frac{1}{h}+\| Y_i\|\leqslant C.
\]
Observe that
\begin{align*}
& \Biggl\vert\exp \Biggl\{\frac{n_i\gamma_i}{60}\w Y_i(t)+\frac
{1}{30}
\sum_{k=1}^{i-1}j_kn_k
\gamma_k\w Y_k(t)+\frac{1}{30}\sum
_{k=i+1}^N\gamma _k\w Y_k(t)
\Biggr\} \Biggr\vert
\nonumber
\\
&\quad\leqslant \exp\Biggl\{-\frac{n_i\gamma_i\sin^2(tw_i/2)}{45}-\frac{2}{45}\sum
_{k=1}^{i-1}j_kn_k
\gamma_k\sin^2(tw_k/2)
\\
&\qquad-\frac{2}{45}\sum_{k=i+1}^N
n_k\gamma_k\sin^2(tw_k/2) \Biggr
\}
\nonumber
\\
&\quad\leqslant\exp \Biggl\{-\frac{1}{90} \Biggl[\sum
_{k=1}^{i-1}j_kn_k\qubar
_k\sin^2(tw_k/2) +\sum
_{k=i}^Nn_k\qubar _k
\sin^2(tw_k/2) \Biggr] \Biggr\}
\nonumber
\\
&\quad\leqslant\ee^{N/90}\exp\Biggl\{-\frac{1}{90} \Biggl[\sum
_{k=1}^{i-1}j_k(n_k
\qubar_k+1)\sin^2(tw_k/2) \\
&\qquad+\sum
_{k=i}^N(n_k\qubar_k+1)
\sin^2(tw_k/2) \Biggr] \Biggr\}
\nonumber
\\
&\quad\leqslant\ee^{N/90} \exp\Biggl\{-\frac{1}{90} \Biggl[\sum
_{k=1}^{i-1}j_k
\max(n_k\qubar_k,1)\sin^2(tw_k/2)
\\
&\qquad+\sum_{k=i}^N\max(n_k
\qubar_k,1)\sin^2(tw_k/2) \Biggr] \Biggr\}
\nonumber
\\
&\quad=\ee^{N/90}\w V(t).
\end{align*}
Therefore, using Lemma \ref{ac}, we prove
\begin{align}
& \Biggl\vert Y_i\exp\{n_i\gamma_iY_i/60
\} \prod_{k=1}^{i-1}A_k^{j_k}
\prod_{k=i+1}^NA_k
\Biggr\vert_K\notag
\\
&\quad\leqslant C(N)Q\Bigl(V,\max_i w_i/h
\Bigr)
\nonumber
\\
&\quad\leqslant C(N) \biggl(\frac{\max w_i}{\min w_i} \biggr) Q(V,\min
w_i/2)
\nonumber
\\
&\quad\leqslant C(N) \biggl(\frac{\max w_i}{\min w_i} \biggr) \Biggl(\sum
_{k=1}^{i-1}j_k\max(n_k
\qubar_k,1)+\sum_{k=i+1}^N
\max(n_k\qubar _k,1) \Biggr)^{-1/2}.\label{mmm}
\end{align}
Next observe that by Lemma \ref{MBlem},
\begin{align*}
\Biggl\vert\prod_{k=1}^{i-1}\ee^{-(1-j_k)n_kC_k}
\Biggr\vert &= C\exp \Biggl\{-\sum_{k=1}^{i-1}(1-j_k)C_kn_k
\Biggr\}
\\
&\leqslant\frac{C(k_0,N)}{\max(1,\sqrt{\sum_{k=1}^{i-1}(1-j_k)\max
(n_k\qubar
_k,1)} )}.
\end{align*}
The last estimate, (\ref{mmm}) and the trivial inequality
$1/(ab)<2/(a+b)$, valid for any $a,b\geqslant1$, allow us to obtain
\begin{align*}
&\sum_{i=1}^N\qubar_i(p_i+
\qubar_i)\sum\nolimits_i^{*} \Biggl\vert
Y_i\exp\{n_i\gamma_iY_i/60\}
\prod_{k=1}^{i-1}A_k^{j_k}
\prod_{k=i+1}^NA_k
\Biggr\vert_K \prod_{k=1}^{i-1}
\ee^{-(1-j_k)n_kC_k}
\\
&\quad\leqslant C(k_0,N)\frac{\max w_j}{\min w_j}\cdot \frac{\sum_{i=1}^N\qubar_i(p_i+\qubar_i)}{\sqrt{\sum_{k=1}^N\max
(n_k\qubar_k,1)}}.
\end{align*}
The estimation of the second sum in (\ref{marko1}) is almost identical
and, therefore, omitted. \qed


\begin{acknowledgement}
The main part of the work was accomplished during the first
author's stay at the Department of Mathematics, IIT Bombay, during
January, 2018. The first author would like to thank the members
of the Department for their hospitality.
We are grateful to the
referees for useful remarks.
\end{acknowledgement}


%
\end{document}